\newtheorem{theorem}{Theorem}
\newtheorem{lemma}[theorem]{Lemma}
\newtheorem{corollary}[theorem]{Corollary}
\newtheorem{proposition}[theorem]{Proposition}
\theoremstyle{definition}
\newtheorem{example}[theorem]{Example}
\newtheorem{remark}[theorem]{Remark}
\newtheorem{question}[theorem]{Question}
\numberwithin{theorem}{section}
\numberwithin{equation}{section}
\numberwithin{figure}{section}
\renewcommand{\bullet}{\circ}
\newcommand{\mex}{\operatorname{mex}}
\newcommand{\rank}{\operatorname{rank}}
\newcommand{\OO}{\mathcal{O}}
\newcommand{\height}{\operatorname{height}}
\newcommand{\Ch}[2]{\ensuremath{\begin{pmatrix} #1 \\ #2 \end{pmatrix}}}
\newcommand{\union}{\cup}
\newcommand{\BGW}{Galton-Watson}
\newcommand{\PP}{\mathbb{P}}
\newcommand{\NN}{\mathbb{N}}
\newcommand{\R}{\mathbb{R}}
\newcommand{\cC}{\mathcal{C}}
\newcommand{\cA}{\mathcal{A}}
\newcommand{\cB}{\mathcal{B}}
\newcommand{\cP}{\mathcal{P}}
\newcommand{\cG}{\mathcal{G}}
\newcommand{\cD}{\mathcal{D}}
\newcommand{\cN}{\mathcal{N}}
\newcommand{\bp}{\mathbf{p}}
\title{Extended Sprague-Grundy theory for 
locally finite games, and applications to 
random game-trees
}
\author{
\textbf{James B.~Martin}
\\
\textit{University of Oxford}
\\
\texttt{martin@stats.ox.ac.uk}
}
\date{}
\begin{document}

\maketitle

\begin{center}
\textit{For a collection of papers in memory of 
Elwyn~Berlekamp (1940-2019), John~Conway (1937-2020), and Richard~Guy (1916-2020)}
\end{center}

\begin{abstract}
The Sprague-Grundy theory 
for finite games without cycles 
was extended to general finite games 
by Cedric Smith and by Aviezri Fraenkel and coauthors. 
We observe that the same framework
used to classify finite games
also covers the case of locally finite games
(that is, games where any position has only finitely
many options). 
In particular, any locally finite game is equivalent to
some finite game.
We then study cases where the directed
graph of a game is chosen randomly, and is 
given by the tree of a \BGW{}
branching process. Natural families of offspring 
distributions display a surprisingly wide range of behaviour.
The setting shows a nice interplay
between ideas from combinatorial game theory
and ideas from probability. 
\end{abstract}

\section{Introduction}

Among the plethora of beautiful and intriguing examples
to be found in
Elwyn Berlekamp, John Conway, and Richard Guy's 
 \textit{Winning Ways}
is the game of 
\textsc{Fair Shares and Varied Pairs}
(\cite[Chapter 12]{WinningWays2}).
The game is played with some number of almonds, which are arranged into heaps. 
A move of the game consists of either
\begin{itemize}
    \item dividing any heap into two or more equal-sized heaps (hence ``fair shares"); or
    \item uniting any two heaps of different sizes (hence ``varied pairs").
\end{itemize}
The only position from which no move is possible is the one where all the almonds
are completely separated into heaps of size $1$. When that position is reached, the
player who has just moved is the winner. 

\textsc{Fair Shares and Varied Pairs} is a \textbf{loopy} game: the directed graph
of game positions has cycles, so the game can return to a previously visited position. 
The way in which the loopiness manifests itself depends on the number of almonds:
\begin{itemize}
    \item With 3 or fewer almonds, there are no cycles. The game is \textbf{non-loopy}.
    \item With 4 to 9 almonds, the graph has loops, but all positions are equivalent to finite nim heaps. Hence 
    in any position, either the first player has a winning
    strategy, or the second player has a winning strategy;
    furthermore, the same is true for the (disjunctive) sum of any two positions, or for the sum of a position with a nim heap. Berlekamp, Conway and Guy call such behaviour
    \textbf{latently loopy}. ``This kind of loopiness is really illusory; unless the winner
    wants to take you on a trip, you won't notice it."
    \item With 10 almonds, still any position has either a forced win for the first player or a forced win for the second player. However, now there exist some \textbf{patently loopy} positions which are not equivalent to finite nim heaps. If one takes the sum of two such positions, or the sum of such a position with a nim heap, one can obtain a game where neither player has a winning strategy -- 
    the game is drawn with best play. 
    \item With 11 or more almonds, there exist \textbf{blatantly loopy} positions where the 
    game is drawn with best play.
\end{itemize}

In this article we explore similar themes, 
but we concentrate particularly on 
cases where the possibility of draws
comes not necessarily from cycles in the game-graph, but 
instead from infinite paths. (Although the game-graph
may be infinite, from any given position there will be
be only finitely many possible moves.)

We also focus on situations where the directed graph
of the game is chosen at random.
The randomness is only in the choice of the graph, i.e.\ of the ``rules of the game". All the games themselves will be combinatorial games in the usual sense, with full information and with no randomness. 

Here is an example. We will consider a population 
where each individual reproduces with 
some given probability $p\in(0,1)$. 
If an individual reproduces, it has $4$ children. 
We start with a single individual (the ``root"). 
With probability $1-p$, the root has no children, and with 
probability $p$, the root has $4$ children, forming generation $1$. If the root does have children, 
then in turn each of those children itself has no children with probability $1-p$, 
and has $4$ children with probability $p$. The collection of those families forms
generation $2$, whose members again go on to reproduce in the same way, and so on. 
All the decisions are made independently. 
From the family tree of this process, we form a directed
graph by taking the individuals as vertices, 
and adding an arc from each vertex to each of its
children. This is an example of a \textit{\BGW{} tree}
(or \textit{Bienaym{\'e} tree}). In the game played on this tree, from every position
there are either $0$ or $4$ possible moves. Again we consider
normal play -- if there are no moves possible from 
a position, the next player to move loses. 

Note for example that the tree could be trivial: with probability $1-p$ it consists
of just a single vertex. Or it could be larger but finite (its size can be any integer which is congruent to $1 \bmod 4$), 
as shown in the example in Figure \ref{fig:gametree}.
But the tree can also be infinite. 

\tikzmath{\hsep=1; \vsep1=1.2; \vsep2=0.8; \vsep3=0.4;}

\begin{figure}[ht]
\begin{center}
\begin{tikzpicture}
\node (A) at (0, 0) {$\bullet$};

\node (B1) at (-1.5*\vsep1, -\hsep) {$\bullet$};
\node (B2) at (-0.5*\vsep1, -\hsep) {$\bullet$};
\node (B3) at (0.5*\vsep1, -\hsep) {$\bullet$};
\node (B4) at (1.5*\vsep1, -\hsep) {$\bullet$};

\node (C1) at (-1.5*\vsep1-1.5*\vsep2, -2*\hsep) {$\bullet$
};
\node (C2) at (-1.5*\vsep1-0.5*\vsep2, -2*\hsep) {$\bullet$};
\node (C3) at (-1.5*\vsep1+0.5*\vsep2, -2*\hsep) {$\bullet$};
\node (C4) at (-1.5*\vsep1+1.5*\vsep2, -2*\hsep) {$\bullet$};
\node (C5) at (1.5*\vsep1-1.5*\vsep2, -2*\hsep) {$\bullet$};
\node (C6) at (1.5*\vsep1-0.5*\vsep2, -2*\hsep) {$\bullet$};
\node (C7) at (1.5*\vsep1+0.5*\vsep2, -2*\hsep) {$\bullet$};
\node (C8) at (1.5*\vsep1+1.5*\vsep2, -2*\hsep) {$\bullet$};

\node (D1) at (1.5*\vsep1-0.5*\vsep2-1.5*\vsep3, -3*\hsep) {$\bullet$};
\node (D2) at (1.5*\vsep1-0.5*\vsep2-0.5*\vsep3, -3*\hsep) {$\bullet$};
\node (D3) at (1.5*\vsep1-0.5*\vsep2+0.5*\vsep3, -3*\hsep) {$\bullet$};
\node (D4) at (1.5*\vsep1-0.5*\vsep2+1.5*\vsep3, -3*\hsep) {$\bullet$};
\node (D5) at (1.5*\vsep1+1.5*\vsep2-1.5*\vsep3, -3*\hsep) {$\bullet$};
\node (D6) at (1.5*\vsep1+1.5*\vsep2-0.5*\vsep3, -3*\hsep) {$\bullet$};
\node (D7) at (1.5*\vsep1+1.5*\vsep2+0.5*\vsep3, -3*\hsep) {$\bullet$};
\node (D8) at (1.5*\vsep1+1.5*\vsep2+1.5*\vsep3, -3*\hsep) {$\bullet$};

\draw[-Latex, shorten >=-4pt] 
(A) edge (B1) 
(A) edge (B2)
(A) edge (B3)
(A) edge (B4)
(B1) edge (C1)
(B1) edge (C2)
(B1) edge (C3)
(B1) edge (C4)
(B4) edge (C5)
(B4) edge (C6)
(B4) edge (C7)
(B4) edge (C8)
(C6) edge (D1)
(C6) edge (D2)
(C6) edge (D3)
(C6) edge (D4)
(C8) edge (D5)
(C8) edge (D6)
(C8) edge (D7)
(C8) edge (D8)
;


\end{tikzpicture}
\end{center}
\caption{
\label{fig:gametree}
An example of a finite directed graph that
could arise from the \BGW{} tree model
considered in the introduction with out-degrees $0$
and $4$.}
\end{figure}
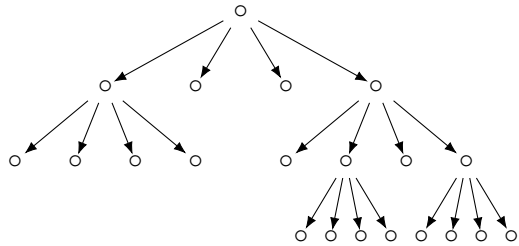

The game played with such a tree as its game-graph displays very interesting parallels with that of \textsc{Fair Shares and Varied Pairs} described above. The behaviour depends on the value of the parameter $p$. We will find that there are thresholds 
$a_0=1/4$, $a_1\approx 0.52198$, $a_2=5^{3/4}/4\approx 0.83593$
such that:
\begin{itemize}
    \item For $p\leq a_0$, the tree is finite with probability $1$.
    \item For $a_0<p\leq a_1$, there is positive probability that the tree is infinite. However, 
    with probability $1$, all its positions are equivalent to finite nim heaps, and so 
    in particular every position has a winning strategy
    for one or the other of the players.
    \item For $a_1<p\leq a_2$, still with probability $1$ every position has a winning strategy for one player or the other.
    However, there is now positive probability that the tree has positions which are not equivalent to finite nim heaps. The sum of two such games, or the sum of such a game with a nim heap, may be drawn with best play.
    \item For $p>a_2$, with positive probability the tree has positions which are drawn with best play.
\end{itemize}

\subsection{Background and outline of results}
The equivalence of any finite loop-free impartial game
to a nim heap was shown independently
by Roland Sprague and by Patrick Grundy in the 1930s.
Richard Guy was a key figure in developing and
broadening the scope of the Sprague-Grundy theory
in the next couple of decades, notably for example 
in his 1956 paper with Cedric Smith \cite{GuySmith}.

An extension of the Sprague-Grundy theory 
to finite games which may contain cycles
was first described by Smith 
\cite{SmithGraphs} and was developed extensively in 
a series of works by Aviezri Fraenkel and coauthors
(for example \cite{FraenkelTassa, FraenkelPerl, FraenkelYesha}). As well as \textit{finite-rank} 
games that are equivalent to nim heaps, one now
additionally has \textit{infinite-rank} games
which are not equivalent to nim heaps. 
The ``extended Sprague-Grundy value"
(or ``loopy nim value") of such a game is written
in the form $\infty(\cA)$, where $\cA\subset\NN$
is the set of nim values of the game's 
finite-rank options. These infinite-rank games
may either be first-player wins (if $0\in\cA$)
or draws (if $0\notin\cA)$.
Again, we have equivalence between two games
if and only if they have the same (extended) Sprague-Grundy
value. 

In \cite{SmithGraphs} Smith already envisages 
extensions of the theory to infinite games,
involving ordinal-valued Sprague-Grundy functions. 
An extension of a different sort to infinite graphs was done by Fraenkel and Rahat \cite{FraenkelRahatpathbounded}, who extend the finite non-loopy
Sprague-Grundy theory to infinite games 
which are \textit{locally path-bounded}, in the sense
that for any vertex of the game-graph, the set of paths starting at that vertex has finite maximum length. 

In this paper we observe that the extended
Sprague-Grundy values which classify finite games
are also enough to classify the class of 
\textit{locally finite games}, in which every position 
has finitely many options. As a result, any such 
locally finite (perhaps cyclic) game is equivalent to a finite (perhaps cyclic) game. 

We then focus in particular on applying the theory
to games whose directed graph is given by a
\textit{\BGW{} tree}, of which the
$0$-or-$4$ tree described in the previous section
is an example. \BGW{} trees provide an extremely
natural model of a random game-tree. They have a self-similarity which can be described as follows:
the root individual has a random number of children
(distributed according to the \textit{offspring distribution}), and then conditional on that number of children, the sub-trees of descendants of each of those children
are independent and have the same distribution as the 
original tree. 

Games on \BGW{} trees (including normal play, 
mis\`{e}re play, and other variants) are
studied by Alexander Holroyd and the current author
in \cite{GaltonWatsongames}. 
There, a particular focus was on 
determining which offspring distributions 
give positive probability of a draw,
and on describing
the type of phase transition that occurs between
the sets of distributions with and without draws. 
In this paper we concentrate on normal play; 
but, armed with the extended Sprague-Grundy theory,
we can investigate, for example, whether
infinite-rank positions occur in games without draws
(the case analogous to Berlekamp, Conway and
Guy's ``patently loopy" behaviour described above). 
This setting shows a very nice interplay between
ideas from combinatorial game theory and
ideas from probability.

One tool on which we rely heavily is the 
study of the behaviour of the game-graph when
the set $\cP$ of its second-player-winning positions is removed. This reduction behaves especially
nicely in the \BGW{} setting.
For example, if we take 
a \BGW{} tree for which draws have probability $0$,
condition the root to be a first-player win, and remove
the set $\cP$, then the remaining component connected
to the root is again a \BGW{} tree, with a new
offspring distribution. Combining iterations of this
procedure with recursions involving the probability generating
function of the offspring distribution yields
a lot of information about the infinite-rank positions
that can occur in the tree. 

We finish by presenting three particular examples
of families of offspring distribution: 
the Poisson case, the geometric case, 
and the $0$-or-$4$ case described above. 
In these examples alone we see a surprisingly wide
variety of different types of behaviour. 

We now briefly describe the organisation of the paper. 

In Section \ref{sec:basicESG} we describe 
the extended Sprague-Grundy theory for
locally finite games. Although the setting is new, 
the results can be written in a form
which is almost identical to that of the finite case.
We proceed in a way that closely parallels the presentation
of Siegel from Section IV.4 of \cite{Siegelbook}
(with some variations of notation). 
The proofs given in \cite{Siegelbook} 
also carry over to the current setting essentially unchanged, and for that reason we do not reproduce
them here. A reader who is not already familiar with the extended Sprague-Grundy theory for finite games may 
like to start with that section of 
\cite{Siegelbook} before reading on further here. 

In Section \ref{sec:reductions}
we discuss the operation of removing $\cP$-positions
from a locally finite game, and examine
its effect on the Sprague-Grundy values of 
the positions which remain. For the particular
case of trees, we give an interpretation involving
\textit{mex labellings} 
(labellings of the vertices of the tree by natural numbers
which obey mex recursions at each vertex). 

In Section \ref{sec:GW}, we introduce 
games on \BGW{} trees, and develop
the analysis via graph reductions and generating
function recursions. 

Finally, examples of particular offspring 
distributions are studied in Section~\ref{sec:examples}.

\section{Extended Sprague-Grundy theory for games with infinite paths}\label{sec:basicESG}
In this section we introduce
basic notation and definitions, 
and then describe the extended Sprague-Grundy
theory for locally finite games. 
The results look identical to those that have 
previously been written for the case of finite games. 
Proofs of these results, written for the case of finite
games but equally applicable here, 
can be found in Section IV.4 of \cite{Siegelbook}.
However, note that formally speaking, the content
of the results is different; this is not just because the scope
of the statements is broader, but also because the definition of equivalence
is different (see the discussion in Section \ref{sec:equivalence}). 
\subsection{Directed graphs and games}
We will represent impartial games by directed graphs. If $V$ is a directed graph, we call the
vertices of $V$ \textbf{positions}. If there is an arc from $x$ to $y$ in $V$,
we write $y\in \Gamma(x)$ (or $y\in \Gamma_V(x)$ if we want to specify the graph $V$) -- here $\Gamma(x)$ is the set of \textbf{options} (i.e.\ out-neighbours) of $x$.
We say that the graph $V$ is \textbf{locally finite} if all its
vertices have finite out-degree; that is, $\Gamma(x)$ is a finite set for each vertex $x$. We may be deliberately loose in using the same symbol $V$ to refer both to the graph and to the set of vertices of the graph.

Informally, we consider two-player games with alternating turns; each turn consists of moving from a position $x$ to a position $y$, where $y\in \Gamma(x)$.
We consider normal play: if we reach a \textbf{terminal} position, 
meaning a vertex with outdegree $0$, then the next player to move loses. 
Since the graphs we consider may have cycles or infinite paths, it may be
that play continues for ever without either player winning. 

Formally, a \textbf{locally finite game} is a pair $G=(V,x)$ where $V$ is a 
locally finite directed graph (which is allowed to contain
cycles) and $x$ is a vertex of $V$. 
We will often write just $x$ instead of $(V,x)$ when the graph $V$ is understood. 
For example, for the outcome function $\OO$, the Sprague-Grundy function $\cG$, and the rank function
(all defined below), we will often write $\OO(x)$, $\cG(x)$,
and $\rank(x)$, rather than $\OO((V,x))$, $\cG((V,x))$,
and $\rank((V,x))$. We use the fuller notation when we need to consider more than one 
graph simultaneously (for example when considering disjunctive sums of games,
or when considering operations which reduce a graph by removing some of its vertices). 

Let $V$ be a directed graph and $o$ a vertex of $V$.
If $o$ has in-degree $0$, and if for every $x\in V$,
there exists a unique directed walk from $o$ to $x$, 
then we say that $V$ is a \textbf{tree} with 
\textbf{root} $o$. If $x$ and $y$ are vertices
of a tree $V$ with $y\in \Gamma_V(x)$, we may say that
$y$ is a \textbf{child} of $x$ in $V$.
We write $\height(x)$ for the \textbf{height}
of $x$, which is the number of arcs
in the path from $o$ to $x$. 

\subsection{Outcome classes}
For a graph $V$, each position $x\in V$ falls into one of three outcome classes:
\begin{itemize}
    \item 
If the first player has a winning strategy from $x$,
then we write $x\in\cN$, or $\OO(x)=\cN$, and say that $x$ is an $\cN$-position.
\item
If the second player has a winning strategy from $x$,
then we write $x\in\cP$, or $\OO(x)=\cP$, and say that $x$ is an $\cP$-position.
\item
If neither player has a winning strategy from $x$, so that
with optimal play the game continues for ever without reaching a terminal position, 
we write 
$x\in\cD$, or $\OO(x)=\cD$, and say that $x$ is an $\cD$-position.
\end{itemize}

\begin{theorem}
Let $V$ be a locally finite graph, and $x\in V$.
\begin{itemize}
    \item 
    $x$ is a $\cP$-position iff every $y\in \Gamma(x)$ is an $\cN$-position.
    \item
    $x$ is an $\cN$-position iff some $y\in \Gamma(x)$ is a $\cP$-position.
    \item   
    $x$ is a $\cD$-position iff no $y\in \Gamma(x)$ is a $\cP$-position, but some $y\in \Gamma(x)$
    is a $\cD$-position.
\end{itemize}
\end{theorem}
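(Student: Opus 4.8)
The plan is to build the three outcome classes from the bottom up by a transfinite induction that tracks \emph{finite-time} wins, and then to identify the leftover positions as the draws. Since at most one player can possess a winning strategy, every position lies in exactly one of $\cN$, $\cP$, $\cD$, and I will exploit this partition at the end. For each ordinal $\alpha$ define, writing $\cN_{<\alpha}=\bigcup_{\beta<\alpha}\cN_\beta$ and $\cP_{<\alpha}=\bigcup_{\beta<\alpha}\cP_\beta$,
\[
\cN_\alpha=\{x:\ \Gamma(x)\cap\cP_{<\alpha}\neq\emptyset\},\qquad
\cP_\alpha=\{x:\ \Gamma(x)\subseteq\cN_{<\alpha}\}.
\]
At $\alpha=0$ this gives $\cN_0=\emptyset$ and $\cP_0=\{\text{terminal positions}\}$, and both families increase in $\alpha$, so they stabilise; write $\cP_\infty=\bigcup_\alpha\cP_\alpha$, $\cN_\infty=\bigcup_\alpha\cN_\alpha$, and set $R=V\setminus(\cP_\infty\cup\cN_\infty)$.

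First I would verify $\cN_\infty\subseteq\cN$ and $\cP_\infty\subseteq\cP$ by producing genuine winning strategies. If $x\in\cN_\alpha$, the mover chooses an option $y\in\cP_{<\alpha}$; since $\Gamma(y)\subseteq\cN_{<\beta}$ for the relevant $\beta<\alpha$, whatever the opponent replies lands back in some $\cN_\gamma$ with $\gamma<\alpha$, so the associated ordinal strictly decreases at each of the mover's turns. As there is no infinite strictly decreasing sequence of ordinals, the play must terminate, necessarily at a terminal position on the opponent's move, so the mover wins in finite time. The case $x\in\cP_\alpha$ is symmetric: every move of the first player enters $\cN_{<\alpha}$, from which the second player runs the $\cN$-strategy just described. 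A point worth flagging here is the one place where local finiteness is essential: because $\Gamma(x)$ is finite and the $\cN_\beta$ increase, $\Gamma(x)\subseteq\cN_\infty$ forces $\Gamma(x)\subseteq\cN_{<\alpha}$ for a \emph{single} $\alpha$, whence $x\in\cP_\infty$; with infinitely many options this implication can fail and the clean characterisation of $\cP$ breaks down.

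The main obstacle is the draw case, which has no analogue in the classical loop-free theory: I must show $R\subseteq\cD$. For $x\in R$, no option lies in $\cP_\infty$ (else $x\in\cN_\infty$), and not all options lie in $\cN_\infty$ (else, by the local-finiteness step above, $x\in\cP_\infty$); hence $x$ has at least one option again in $R$, and $R$ contains no terminal positions. The strategy ``always move to an option in $R$'' is therefore available to whoever is to move from an $R$-position, and following it can only lead to an unending play inside $R$ (a draw) or to the opponent voluntarily stepping into $\cN_\infty$, where the player following the strategy then wins. Consequently the mover from $x\in R$ can avoid losing; and so can the non-mover, since the first player's opening move must enter either $\cN_\infty$ (handing the second player a win) or $R$ (where the second player adopts the same strategy). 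Thus neither player can force a win from $x$, so $x\in\cD$.

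Finally I would assemble the characterisations. We have $\cP_\infty\subseteq\cP$, $\cN_\infty\subseteq\cN$, $R\subseteq\cD$, while $V=\cP_\infty\sqcup\cN_\infty\sqcup R$ by construction and $\cP,\cN,\cD$ partition $V$; comparing the two partitions forces $\cP_\infty=\cP$, $\cN_\infty=\cN$, and $R=\cD$. The definition of $\cP_\alpha$ (with the local-finiteness argument) then reads as ``$x\in\cP$ iff every option is in $\cN$''; the definition of $\cN_\alpha$ gives ``$x\in\cN$ iff some option is in $\cP$''; and $x\in\cD=R$ means no option is in $\cP$ while not all options are in $\cN$, i.e.\ some option is in $\cD$. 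These are exactly the three claimed equivalences.
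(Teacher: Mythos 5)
Your proof is correct, and it is worth saying how it sits relative to the paper: the paper does not actually prove this theorem at all --- it asserts that the proofs written for finite games in Section IV.4 of \cite{Siegelbook} carry over unchanged to the locally finite setting --- so your argument is a genuinely self-contained substitute. What you do is the natural transfinite version of the finite-graph argument: build the increasing hierarchy of positions from which a win can be forced, extract honest strategies by descending-ordinal induction, and dispose of the leftover set $R$ via the ``stay in $R$'' non-losing strategy; the final comparison of the two partitions of $V$ is clean and complete. Two remarks. First, for locally finite graphs your hierarchy in fact stabilises by stage $\omega$: each option of $x$ lying in $\cN_\infty$ lies in some $\cN_n$ with $n$ finite, so taking the maximum over the finitely many options gives $x\in\cP_{n+1}$, and no ordinal beyond $\omega$ is ever needed; ordinary induction on $n\in\NN$ would suffice, and this is precisely the sense in which the finite-case proof ``carries over essentially unchanged'' as the paper claims. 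Second, your parenthetical assertion that the implication $\Gamma(x)\subseteq\cN_\infty\Rightarrow x\in\cP_\infty$ ``can fail'' with infinitely many options, so that the characterisation of $\cP$ breaks down, is not right: with transfinite stages one simply takes $\alpha=\sup_{y\in\Gamma(x)}(\alpha_y+1)$, a supremum of a \emph{set} of ordinals, so the implication --- and indeed the whole outcome trichotomy with its three recursions --- remains valid for games with arbitrary branching (this is essentially open-game determinacy, granting the axiom of choice to assemble strategies option by option). Local finiteness earns its keep elsewhere in the paper, in the Sprague--Grundy theory proper (finiteness of the sets $\cA$ in the values $\infty(\cA)$, equivalence to finite games), not in this theorem. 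This mislocation does not create a gap, since the theorem's hypothesis grants you local finiteness wherever you invoke it; it only means the hypothesis is doing less work in your proof than you believe.
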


\subsection{Disjunctive sums and equivalence between games}
\label{sec:equivalence}
Let $V$ and $W$ be directed graphs. We define
$V\times W$ to be the directed graph whose vertices are
$\{(x,y), x\in V, y\in W\}$, and which has an arc from 
$(u,v)$ to $(x,y)$ iff either $u=x$ and $y\in \Gamma_W(v)$, 
or $x\in \Gamma_V(u)$ and $v=y$. If $V$ and $W$ are both locally finite,
then so is $V\times W$

If $G=(V,x)$ and $H=(W,y)$ are locally finite games, 
we define their (disjunctive) \textbf{sum} $G+H$
to be the locally finite game $(V\times W, (x,y))$.

We have the following interpretation. A position
of $V\times W$ is an ordered pair of a position of $V$ and a position of $W$.
To make a move in the sum of games, from position $(x,y)$ of $V\times W$,
one must either move from $x$ to one of its options in $V$,
or from $y$ to one of its options in $W$ (and not both). 
The position $(x,y)$ is terminal for $V\times W$ iff $x$ is terminal for
$V$ and $y$ is terminal for $W$.

Now we define equivalence between two locally finite games $G$ and $H$. 
The games $G$ and $H$ are said to be \textbf{equivalent},
denoted by $G=H$,
if $\OO(G+X)=\OO(H+X)$ for every locally finite game $X$.

Note here that we have defined equivalence within the class of locally finite
games: we required the equality to hold for every locally finite game $X$.
The definition (and the meaning of the results below) 
would be different if $X$ ranged over a different set. 
However, it will follow from the extended Sprague-Grundy theory below 
that this equivalence extends both the equivalence within the class
of finite loopfree graphs, and that within the class of finite graphs. 
That is, two finite games are equivalent within the class of
finite games iff they are equivalent with the class of locally finite games; 
also two finite loopfree games are equivalent within the class
of finite loopfree games iff they are equivalent within the class of finite games. 



\subsection{The rank function and the Sprague-Grundy function}
Let $V$ be a locally finite directed graph. We recursively define $\cG_n(x)$ for $x\in V$ and 
$n\geq 0$ as follows.
First, let 
\[
\cG_0(x)=\begin{cases}
0,&\text{if $x$ is terminal;}\\
\infty,&\text{otherwise.}
\end{cases}
\]
Then for $n\geq 1$ and given $x$, write $m=\mex\{\cG_{n-1}(y), y\in \Gamma(x)\}$, and
let
\[
\cG_{n}(x)=\begin{cases}
m,&\parbox[t][][t]{0.7\linewidth}{
if for each $y\in\Gamma(x)$,
either $\cG_{n-1}(y)\leq m$, 
or there is $z\in\Gamma(y)$ with $\cG_{n-1}(z)=m$;
}\\
\infty,&\text{otherwise.}
\end{cases}
\]

\begin{proposition}
\label{prop:SGsetup}
Let $x\in V$. Then either:
\begin{itemize}
    \item 
    $\cG_n(x)=\infty$ for all $n$; or
    \item
    there exist $m$ and $n_0$ such that
    \[
    \cG_n(x)=\begin{cases}
    \infty,&\text{if } n< n_0;\\
    m,&\text{if }n\geq n_0.
    \end{cases}
    \]
\end{itemize}
\end{proposition}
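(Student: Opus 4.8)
The plan is to reduce the proposition to a single \emph{stability lemma}: for every vertex $x$ and every $n\geq 0$, if $\cG_n(x)<\infty$ then $\cG_{n+1}(x)=\cG_n(x)$. Granting this, the proposition follows immediately, since once a finite value appears it is never lost and never changes: iterating the lemma gives $\cG_k(x)=\cG_n(x)$ for all $k\ge n$. Thus if $\cG_n(x)=\infty$ for every $n$ we are in the first case of the statement; otherwise set $n_0=\min\{n:\cG_n(x)<\infty\}$ and $m=\cG_{n_0}(x)$, and the lemma yields $\cG_n(x)=m$ for all $n\ge n_0$, while minimality of $n_0$ gives $\cG_n(x)=\infty$ for $n<n_0$. (Terminal vertices $x$ are simply the case $n_0=0$, $m=0$.)

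I would prove the stability lemma by induction on $n$, with the inductive statement quantified over \emph{all} vertices simultaneously, since $\cG_{n+1}(x)$ depends on the level-$n$ values of both the children and the grandchildren of $x$. The base case $n=0$ is immediate: the only vertices $x$ with $\cG_0(x)<\infty$ are the terminal ones, for which $\Gamma(x)=\emptyset$ forces $\cG_n(x)=0$ for all $n$. For the inductive step, assume that for every vertex $w$, $\cG_{n-1}(w)<\infty$ implies $\cG_n(w)=\cG_{n-1}(w)$, and suppose $\cG_n(x)=m<\infty$. By the recursion this means $m=\mex\{\cG_{n-1}(y):y\in\Gamma(x)\}$, and that each child $y\in\Gamma(x)$ satisfies either $\cG_{n-1}(y)\le m$ or has a child $z\in\Gamma(y)$ with $\cG_{n-1}(z)=m$.

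The heart of the argument is to show that the mex is unchanged one level later, i.e.\ $\mex\{\cG_n(y):y\in\Gamma(x)\}=m$. For the values below $m$ this is easy: for each $k<m$ some child $y$ has $\cG_{n-1}(y)=k\le m$, whence by the inductive hypothesis $\cG_n(y)=k$, so every value below $m$ is still attained at level $n$. The delicate point, and the step I expect to be the main obstacle, is to rule out that some child acquires the value $m$ at level $n$. If $\cG_n(y)=m$ for a child $y$, then $m=\mex\{\cG_{n-1}(z):z\in\Gamma(y)\}$, so no child $z$ of $y$ has $\cG_{n-1}(z)=m$; the ``grandchild'' alternative of the level-$n$ condition therefore fails for $y$, forcing $\cG_{n-1}(y)\le m$, hence $\cG_{n-1}(y)<m$ (as $m$ is excluded by the mex), and then the inductive hypothesis gives $\cG_n(y)=\cG_{n-1}(y)<m$, a contradiction. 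This shows $m$ remains excluded, so the mex stays equal to $m$.

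Finally I would check that the ``$\mex$ or $\infty$'' dichotomy defining $\cG_{n+1}(x)$ resolves in favour of $m$ rather than $\infty$. For each child $y\in\Gamma(x)$: if $\cG_{n-1}(y)\le m$ then the inductive hypothesis gives $\cG_n(y)=\cG_{n-1}(y)\le m$, so the first alternative holds at level $n$; otherwise there is a child $z\in\Gamma(y)$ with $\cG_{n-1}(z)=m<\infty$, and the inductive hypothesis gives $\cG_n(z)=m$, so the second alternative holds at level $n$ with the same $z$. Hence $\cG_{n+1}(x)=m=\cG_n(x)$, completing the induction. The only genuinely nontrivial ingredient throughout is the simultaneous-over-vertices induction, together with the observation used to keep the value $m$ excluded, namely that a child cannot suddenly realise the value $m$; everything else is routine bookkeeping with the mex and the inductive hypothesis.
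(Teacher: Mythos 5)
Your proof is correct: reducing the proposition to the stability lemma (a finite value, once attained, persists unchanged at all later stages) is exactly what is needed, and your simultaneous-over-all-vertices induction is sound — in particular the key exclusion step, showing that a child $y$ cannot newly acquire the value $m$ at level $n$ (since $\cG_n(y)=m$ would force $\cG_{n-1}(y)<m$ via the failure of the grandchild alternative, contradicting the inductive hypothesis), is precisely the delicate point and you handle it correctly. Note that the paper itself contains no proof of this proposition — it defers to Section IV.4 of Siegel's book, where the result is proved for finite graphs and the argument is said to carry over unchanged to the locally finite setting; your argument is essentially that standard stabilization argument, written out self-contained, so there is no substantive difference in approach to report.
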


In the light of Proposition \ref{prop:SGsetup}, we can now define
the \textbf{extended Sprague-Grundy function} $\cG$ in the case of a 
locally finite graph $V$. Let $x\in V$. If the second case of 
Proposition \ref{prop:SGsetup} holds, and $\cG_n(x)=m$ for all sufficiently large $n$, then $\cG(x)=m$. Otherwise, we write
\[
\cG_n(x)=\infty(\cA),
\]
where $\cA$ is the finite set defined by
\[
\cA=\{a\in\NN: \cG(y)=a \text{ for some }y\in \Gamma(x)\}.
\]
We then define the \textbf{rank} of $x$,
written $\rank(x)$,
to be the least 
$n$ such that $\cG_n(x)$ is finite, or $\infty$ if no such $n$ exists.
(Hence the finite-rank vertices are
those $x$ with $\cG(x)=m\in\NN$, 
while the infinite-rank vertices are those $x$
with $\cG(x)=\infty(\cA)$ for some $\cA\subset\NN$.)

Some examples of extended Sprague-Grundy values can be found
in Figure \ref{fig:reduction-counterexample}.

The extended Sprague-Grundy value of $x$ determines its outcome $\OO(x)$:
\begin{theorem}\label{thm:SpragueGrundyoutcomes}
$\,$
\begin{itemize}
    \item[(a)] $\cG(x)=0$ iff $\OO(x)=\cP$.
    \item[(b)] If $\cG(x)$ is a positive integer, then $\OO(x)=\cN$.
    \item[(c)] If $\cG(x)=\infty(\cA)$ for a set $\cA$ with $0\in\cA$, then 
    $\OO(x)=\cN$.
    \item[(d)] $\cG(x)=\infty(\cA)$ for some $\cA$ with $0\notin\cA$
    iff $\OO(x)=\cD$.
\end{itemize}
\end{theorem}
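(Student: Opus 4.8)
The plan is to exploit the fact that the four possible kinds of extended Sprague--Grundy value partition the positions: every $x$ satisfies exactly one of $\cG(x)=0$; $\cG(x)=m$ for some integer $m\geq 1$; $\cG(x)=\infty(\cA)$ with $0\in\cA$; or $\cG(x)=\infty(\cA)$ with $0\notin\cA$. The outcome classes $\cP,\cN,\cD$ also partition the positions. I would therefore prove the four forward implications ``value-class $\Rightarrow$ outcome-class'' asserted in (a)--(d) — that is, the positions with $\cG=0$ lie in $\cP$, those with $\cG$ a positive integer lie in $\cN$, those with $\cG=\infty(\cA)$, $0\in\cA$ lie in $\cN$, and those with $\cG=\infty(\cA)$, $0\notin\cA$ lie in $\cD$ — and then observe that, since these four inclusions carry a partition of all positions into the outcome partition while covering everything, they automatically upgrade to equalities. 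That single observation delivers every ``iff'' in the statement at once. Throughout I would use Proposition~\ref{prop:SGsetup} to pass freely between the stabilised value $\cG(y)$ and the finite-stage values $\cG_n(y)$, and I would record the two basic consequences of the recursion that I actually use: if $\cG(x)=m$ is finite then each value $k<m$ is realised by an option of strictly smaller rank; and the crucial ``response'' clause, namely that every option $y$ with $\cG_{n-1}(y)>m$ has a child $z$ with $\cG_{n-1}(z)=m$.

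For the two finite-rank inclusions I would induct on $\rank(x)$, using the recursive characterisation of outcome classes (the first theorem of Section~\ref{sec:basicESG}). If $\cG(x)=0$ with $\rank(x)=n_0\geq 1$, the value-$0$ mex together with the response clause forces every option $y$ to have a child $z$ with $\cG(z)=0$ and $\rank(z)<n_0$; by induction each such $z$ is a $\cP$-position, so every option of $x$ is an $\cN$-position and hence $x\in\cP$. If $\cG(x)=m\geq 1$, the mex produces an option of value $0$ and smaller rank, which is a $\cP$-position by induction, so $x\in\cN$. Because the witnessing strategy always moves to a position of strictly smaller rank it terminates, so these are genuine wins and not draws — a point I would emphasise, since it is exactly what prevents a finite-rank position from being a $\cD$-position. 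The inclusion for $\cG(x)=\infty(\cA)$ with $0\in\cA$ is then immediate: $0\in\cA$ means some option has value $0$, hence is a $\cP$-position, so $x\in\cN$.

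The heart of the matter, and what I expect to be the main obstacle, is the following lemma: if $\cG(x)=\infty(\cA)$ with $0\notin\cA$, then $x$ has an option of exactly the same type (infinite rank, with $0$ absent from its set). I would prove it by contradiction. If no option is of this type, then, since $0\notin\cA$ already rules out value-$0$ options, every option is either finite with value $\geq 1$ or infinite-rank with $0$ in its set. In both cases the option $y$ has a child of value $0$ — for a positive finite value this comes from the mex at $y$, and for the infinite-rank case it is precisely the meaning of $0\in\cA_y$. Computing $\cG_n(x)$ for large $n$, the mex over the (finitely many) options equals $0$, and the value-$0$ children just produced supply the response demanded by the recursion for every option; hence $\cG_n(x)=0$ for all large $n$, so $x$ would have finite rank, contradicting $\cG(x)=\infty(\cA)$. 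The delicate part is that the response clause must be verified uniformly across the finite-value and the infinite-rank options, and this is where local finiteness of $\Gamma(x)$ (so that the mex stabilises) is genuinely used.

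Finally I would deduce that positions with $\cG(x)=\infty(\cA)$, $0\notin\cA$, lie in $\cD$ by exhibiting non-losing strategies for both players from such an $x$. The lemma lets either player, on their own turn, move within this class; and from a position of this class no option has value $0$, so the only alternatives for the opponent are to stay in the class, or to move to a position of positive finite value, or to an infinite-rank position with $0$ in its set. Both of the latter are $\cN$-positions from which the player then to move wins by the already-established, terminating finite-rank strategy (in the $0\in\cA$ case by first stepping to a value-$0$ child and defending it). Thus each player can guarantee that play either continues for ever inside the class — a draw — or leaves it, in which case that player wins; in either case neither player loses, so neither can force a win and $x\in\cD$. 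With all four forward inclusions established, the partition/covering observation of the first paragraph promotes them to equalities, which is exactly the content of (a)--(d).
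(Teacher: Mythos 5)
Your proposal is correct. Bear in mind that the paper gives no proof of Theorem~\ref{thm:SpragueGrundyoutcomes} at all: it defers to Section IV.4 of \cite{Siegelbook}, asserting that the proofs written there for finite loopy games carry over essentially unchanged to the locally finite setting. What you have written is, in substance, a reconstruction of that standard argument: induction on $\rank$ for the finite-value inclusions, and, for the case $\cG(x)=\infty(\cA)$ with $0\notin\cA$, the key lemma that such a position always has an option of the same kind, from which both players extract non-losing strategies. Two features of your write-up are worth highlighting. First, the partition/covering device that upgrades the four forward implications to the stated iffs is a clean organisational trick; it spares you from proving any converse directly. Second, you put your finger on exactly the point that makes the paper's ``carries over unchanged'' claim true: local finiteness of $\Gamma(x)$ means the finitely many options' stage values $\cG_{n-1}$ stabilise uniformly, so the mex and the response clause can be certified at a single large $n$; this is where your contradiction argument for the key lemma genuinely needs the hypothesis. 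Two small remarks: the ``basic consequences of the recursion'' you isolate are precisely Proposition~\ref{prop:usefulSGfacts}, which the paper happens to state \emph{after} the theorem but which depends only on the construction of $\cG_n$, so your use of it creates no circularity; and in your key lemma the mex over $\Gamma(x)$ equals $0$ for \emph{every} $n$, not just large $n$, since by Proposition~\ref{prop:SGsetup} each option's stage value is either $\infty$ or its stabilised (nonzero) value --- only the response clause actually requires taking $n$ large.
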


Theorem \ref{thm:SpragueGrundyoutcomes} tells us that the Sprague-Grundy value 
of a position determines its outcome class. In fact, much more is true: 
the Sprague-Grundy values of two games determines the Sprague-Grundy value, 
and hence the outcome class, of their sum. The algebra of
the Sprague-Grundy values is the same as in the case of finite loopy graphs,
and full details can be found at the end of Section IV.4 of \cite{Siegelbook}. Again
the proofs carry over unchanged to the locally finite setting. 
We note a few particular consequences:
\begin{theorem}\label{thm:algebra}
Let $G$ and $H$ be locally finite games. 
\begin{itemize}
    \item[(a)] $G+H$ has infinite rank iff at least one of $G$ and $H$ have infinite rank.
    \item[(b)] If both $G$ and $H$ have infinite rank than $\cG(G+H)=\infty(\emptyset)$,
    and in particular $\OO(G+H)=\cD$.
    \item[(c)] If $\cG(G)=m\in\NN$, then $G$ is equivalent to $*m$, a nim heap of size $m$. 
    \item[(d)] $G$ and $H$ are equivalent iff $\cG(G)=\cG(H)$.
\end{itemize}
\end{theorem}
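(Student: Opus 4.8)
The plan is to derive all four parts from a single \emph{sum formula} for the extended Sprague--Grundy function, together with Theorem~\ref{thm:SpragueGrundyoutcomes}. I would extend nim-addition to the extended values by setting, for $m\in\NN$ and finite $\cA,\cB\subseteq\NN$,
\[
m\oplus\infty(\cB)=\infty(m\oplus\cB),\qquad \infty(\cA)\oplus\infty(\cB)=\infty(\emptyset),
\]
where $m\oplus b$ is the usual nim-sum and $m\oplus\cB=\{m\oplus b:b\in\cB\}$. The engine is the claim that
\[
\cG(G+H)=\cG(G)\oplus\cG(H)
\]
for all locally finite $G,H$. This is precisely the algebra of loopy nim values, proved as in the finite case (\cite[Section~IV.4]{Siegelbook}): the finite--finite case is the classical Sprague--Grundy theorem, while the cases involving infinite rank are handled by comparing the approximating sequences $\cG_n$ on the product graph $V\times W$ with those on the factors. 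I would also use the standard fact that the nim heap satisfies $\cG(*k)=k$, which follows directly from the recursion.

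Given the sum formula, parts (a) and (b) are immediate. The value $\cG(G)\oplus\cG(H)$ is a natural number precisely when both $\cG(G)$ and $\cG(H)$ are, and is an infinite value otherwise; since the finite-rank positions are exactly those with $\cG\in\NN$, this is (a). When both summands have infinite rank the formula gives $\cG(G+H)=\infty(\emptyset)$; as $0\notin\emptyset$, Theorem~\ref{thm:SpragueGrundyoutcomes}(d) yields $\OO(G+H)=\cD$, which is (b). The forward direction of (d) is equally direct: if $\cG(G)=\cG(H)$ then for every locally finite $X$ the sum formula gives $\cG(G+X)=\cG(G)\oplus\cG(X)=\cG(H)\oplus\cG(X)=\cG(H+X)$, so Theorem~\ref{thm:SpragueGrundyoutcomes} forces $\OO(G+X)=\OO(H+X)$ and hence $G=H$. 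Part (c) is then the instance $H=*m$ of (d), using $\cG(*m)=m$.

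The one part requiring a genuine construction is the converse of (d): assuming $\cG(G)\neq\cG(H)$, I must produce a single witness $X$ with $\OO(G+X)\neq\OO(H+X)$, and in every case a nim heap $X=*k$ will do. If $\cG(G),\cG(H)$ are finite and unequal, I take $k=\cG(H)$, so that $\cG(H+*k)=0$ is a $\cP$-position while $\cG(G+*k)=\cG(G)\oplus k\neq 0$ is an $\cN$-position. If exactly one value, say $\cG(G)=m$, is finite, I take $k=m$, so that $\cG(G+*k)=0$ is a $\cP$-position while $\cG(H+*k)$ is an infinite value and hence, by Theorem~\ref{thm:SpragueGrundyoutcomes}, never a $\cP$-position. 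If both values are infinite, say $\cG(G)=\infty(\cA)$ and $\cG(H)=\infty(\cB)$ with $\cA\neq\cB$, I choose $a$ lying in exactly one of $\cA,\cB$, say $a\in\cA\setminus\cB$, and take $k=a$; then $\cG(G+*a)=\infty(a\oplus\cA)$ contains $0$ (as $a\in\cA$), so is an $\cN$-position, whereas $\cG(H+*a)=\infty(a\oplus\cB)$ does not contain $0$ (as $a\notin\cB$), so is a $\cD$-position. In each case the outcomes differ, completing (d) and hence the theorem.

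The main obstacle lies not in these deductions but in the sum formula itself, specifically its infinite-rank cases. Because the game-graphs may contain cycles and infinite paths, the rank does not furnish a well-founded order on positions, so the induction cannot be run on the structure of the games; instead one must induct on the approximation index $n$, tracking exactly when the stability clause in the definition of $\cG_n$ is triggered on the product graph. The delicate point is the collapse $\infty(\cA)\oplus\infty(\cB)=\infty(\emptyset)$ in the both-infinite case, where one must verify that the product position has no finite-rank options at all; this is the only step where the locally finite, rather than merely finite, setting needs to be checked, and it is exactly the content deferred to \cite{Siegelbook} in the text above.
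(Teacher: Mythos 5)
Your proposal is correct and follows essentially the same route as the paper: the paper states Theorem \ref{thm:algebra} as a collection of consequences of the algebra of extended Sprague--Grundy values, deferring all proofs to the end of Section IV.4 of \cite{Siegelbook} with the observation that they carry over unchanged to the locally finite setting, and your sum formula $\cG(G+H)=\cG(G)\oplus\cG(H)$ (with $m\oplus\infty(\cB)=\infty(m\oplus\cB)$ and $\infty(\cA)\oplus\infty(\cB)=\infty(\emptyset)$) is exactly that deferred algebra. Your explicit derivations of (a)--(d) from it, including the nim-heap witnesses for the converse direction of (d), are correct and simply spell out what the paper leaves implicit.
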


\begin{corollary}\label{corr:equivalence}
Every locally finite game
is equivalent to some finite game.
\end{corollary}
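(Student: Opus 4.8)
The plan is to reduce everything to the classification of Sprague-Grundy values. By Theorem~\ref{thm:algebra}(d), two locally finite games are equivalent precisely when they have the same extended Sprague-Grundy value, so it suffices to show that every value which can arise as $\cG(G)$ for some locally finite game is already realised by some \emph{finite} game. By Proposition~\ref{prop:SGsetup} together with the definition of $\cG$, the possible values are exactly the finite values $m\in\NN$ and the infinite-rank values $\infty(\cA)$ with $\cA\subset\NN$ finite, so there are two cases to treat.

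The finite case is immediate: if $\cG(G)=m\in\NN$, then Theorem~\ref{thm:algebra}(c) already tells us that $G$ is equivalent to the nim heap $*m$, which is a finite (indeed loop-free) game. For the infinite-rank case I would exhibit, for each finite set $\cA=\{a_1,\dots,a_k\}\subset\NN$, a finite graph whose root has value $\infty(\cA)$. The construction is to take a root $x$, attach to it one copy of a standard finite nim-heap gadget of value $a_i$ for each $i$ (so that $x$ acquires finite-rank options of Sprague-Grundy values $a_1,\dots,a_k$), and in addition give $x$ a single self-loop. This graph is finite. Its finite-rank options out of $x$ then have value-set exactly $\cA$, so once $x$ is known to have infinite rank we obtain $\cG(x)=\infty(\cA)$ directly from the definition.

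The only point requiring real verification is that the self-loop forces $x$ to have infinite rank, i.e.\ that $\cG_n(x)=\infty$ for every $n$. I would prove this by induction on $n$. We have $\cG_0(x)=\infty$ since $x$ is not terminal; and if $\cG_{n-1}(x)=\infty$, then, writing $m=\mex\{\cG_{n-1}(y):y\in\Gamma(x)\}$, the self-loop option $y=x$ satisfies $\cG_{n-1}(x)=\infty>m$, while no option $z\in\Gamma(x)$ has $\cG_{n-1}(z)=m$ (because $m$ is by definition excluded from the set of option-values). Hence the defining clause for a finite $\cG_n(x)$ fails already at $y=x$, forcing $\cG_n(x)=\infty$. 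Note this argument is robust to whether the attached nim-heap gadgets have yet stabilised, since all it uses is that $m$ is a natural number (so $\infty>m$) and that $m$ is not attained among the options.

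Combining the two cases, every possible Sprague-Grundy value of a locally finite game is attained by a finite game, and Theorem~\ref{thm:algebra}(d) then yields equivalence of the original game $G$ with the corresponding finite one. The main obstacle is essentially just designing the right finite gadget for the infinite-rank values; once the self-loop device is in place the rest is the short induction above and an appeal to the algebra of Sprague-Grundy values.
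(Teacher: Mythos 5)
Your proposal is correct and follows essentially the route the paper intends: the paper states this as a direct corollary of Theorem~\ref{thm:algebra} (with proofs deferred to Siegel's treatment of finite loopy games), namely that equivalence is determined by the extended Sprague--Grundy value, that finite-rank values are realised by nim heaps, and that each value $\infty(\cA)$ with $\cA$ finite is realised by the standard finite gadget of a self-looped vertex with nim-heap options. Your explicit induction showing the self-loop forces infinite rank is a correct filling-in of the detail the paper leaves implicit.
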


We finish the section by recording the following 
consequence of the 
construction
of the extended Sprague-Grundy function,
in a form which will be useful for later reference.

\begin{proposition}\label{prop:usefulSGfacts}
Let $V$ be a locally finite graph, and $x\in V$.
Then the following are equivalent:
\begin{itemize}
    \item[(a)]
    $\rank(x)\leq n$ and $\cG(x)=m$.
    \item[(b)]
    The following two properties hold:
    \begin{itemize}
        \item[(i)]
        For each $i$ with $0\leq i\leq m-1$,
        there exists $y_i\in \Gamma(x)$
        such that $\rank(y)<n$ and $\cG(y_i)=i$.
        \item[(ii)]
        For all $y\in\Gamma(x)$,
        either $\rank(y)<n$ and $\cG(y)< m$, 
        or there is $z\in \Gamma(y)$
        with $\rank(z)<n$ and $\cG(z)=m$.
    \end{itemize}       
\end{itemize}
\end{proposition}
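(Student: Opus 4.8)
The plan is to reduce statement (a) to the single equation $\cG_n(x)=m$, unwind the recursive definition of $\cG_n$, and match the resulting conditions against (i) and (ii); the one substantive point is an auxiliary ``no-repeat'' lemma isolated below.

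First I would observe that (a) is equivalent to $\cG_n(x)=m$. By definition $\rank(x)\le n$ holds precisely when $\cG_n(x)$ is finite, and by Proposition~\ref{prop:SGsetup} a finite value of $\cG_n(x)$ must equal the stable value $\cG(x)$; so $\rank(x)\le n$ together with $\cG(x)=m$ says exactly $\cG_n(x)=m$. Applying the same reasoning one level down gives a translation dictionary: for a vertex $w$ and $j\in\NN$, the statement $\cG_{n-1}(w)=j$ is equivalent to ``$\rank(w)<n$ and $\cG(w)=j$'', and $\cG_{n-1}(w)\le m$ is equivalent to ``$\rank(w)<n$ and $\cG(w)\le m$''.

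Next I would unwind $\cG_n(x)=m$ directly from its definition. It asserts that $m=\mex\{\cG_{n-1}(y):y\in\Gamma(x)\}$ and that for every $y\in\Gamma(x)$ either $\cG_{n-1}(y)\le m$ or some $z\in\Gamma(y)$ has $\cG_{n-1}(z)=m$. The mex equation splits into ``each of $0,\dots,m-1$ occurs among the $\cG_{n-1}(y)$'', which is exactly (i) under the dictionary, and ``$m$ does not occur'', i.e.\ no $y\in\Gamma(x)$ has $\rank(y)<n$ and $\cG(y)=m$; call this ($\ast$). The remaining clause translates to: for every $y\in\Gamma(x)$, either $\rank(y)<n$ and $\cG(y)\le m$, or some $z\in\Gamma(y)$ has $\rank(z)<n$ and $\cG(z)=m$. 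The only discrepancy between this and (ii) is $\cG(y)\le m$ versus the strict $\cG(y)<m$, and given ($\ast$) the borderline value $m$ is excluded, so the two clauses coincide. Thus $\cG_n(x)=m$ is equivalent to (i), ($\ast$) and (ii) together, and it remains to show that (i) and (ii) already force ($\ast$).

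The crux, which I expect to be the main obstacle, is the lemma: \emph{a finite-rank vertex $v$ with $\cG(v)=m$ has no option of finite rank and Sprague-Grundy value $m$}. Granting it, ($\ast$) follows from (ii): a $y\in\Gamma(x)$ with $\rank(y)<n$ and $\cG(y)=m$ would fail the first alternative of (ii), forcing some $z\in\Gamma(y)$ with $\rank(z)<n$ and $\cG(z)=m$, a finite-rank option of $y$ sharing its value $m$, contrary to the lemma. To prove the lemma I would descend on rank. If $v$ has rank $k$ and $z\in\Gamma(v)$ has finite rank $k'$ with $\cG(z)=m$, then since $\cG_k(v)=m=\mex\{\cG_{k-1}(w):w\in\Gamma(v)\}$ no option of $v$ attains $\cG_{k-1}$-value $m$, so $\rank(z)\le k-1$ is impossible and $k'\ge k$, whence $\cG_{k-1}(z)=\infty$; the second requirement in the definition of $\cG_k(v)=m$ then supplies, for the option $z$, some $z'\in\Gamma(z)$ with $\cG_{k-1}(z')=m$, that is $\rank(z')\le k-1<k'$ and $\cG(z')=m$. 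But applying the mex identity at $z$ itself, whose rank is $k'$, shows no option of $z$ can have $\cG$-value $m$ at rank below $k'$, contradicting $z'$. This contradiction establishes the lemma and completes the equivalence.
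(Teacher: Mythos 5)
Your proof is correct. There is nothing to compare it against line-by-line, because the paper records this proposition without proof, as a ``consequence of the construction'' of $\cG_n$, with detailed arguments deferred to Section IV.4 of Siegel's book; your argument is the natural unwinding of that construction (translate (a) into the single statement $\cG_n(x)=m$, then match the mex equation and the stability clause against (i) and (ii)), and is surely the intended reading. The genuine content you add is the recognition that (i) and (ii) alone do not obviously yield the missing half of the mex equation --- that no $y\in\Gamma(x)$ has $\rank(y)<n$ and $\cG(y)=m$ --- and your auxiliary ``no-repeat'' lemma (a finite-rank vertex has no finite-rank option with the same Sprague--Grundy value) fills exactly this gap; without it, (b)$\Rightarrow$(a) is incomplete. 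Your proof of the lemma is also sound, and the two-step rank comparison in it is genuinely necessary: the mex identity at $v$ only constrains options of rank strictly below $\rank(v)$, so one cannot directly conclude $\cG(z)\neq m$ for an option $z$ of higher rank; instead you correctly first deduce $\rank(z)\geq\rank(v)$, then use the stability clause at $v$ to produce $z'\in\Gamma(z)$ with $\rank(z')<\rank(v)\leq\rank(z)$ and $\cG(z')=m$, which contradicts the mex identity at $z$ itself. (The only detail glossed over is the degenerate case $n=0$, where the recursive definition is not invoked; there (a) forces $x$ terminal and $m=0$, and both sides of the equivalence hold trivially, so nothing is lost.)
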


\section{Reduced graphs}\label{sec:reductions}

Let $k\geq 0$. We will say that a locally finite directed graph $V$ is \textbf{$k$-stable}
if whenever $x\in V$ has infinite rank, i.e.\
whenever $\cG((V,x))=\infty(\cA)$ for some $\cA$, 
then $\{0,1,\dots, k\}\subseteq \cA$.

Note that by Theorem \ref{thm:SpragueGrundyoutcomes}(d), being $0$-stable is equivalent to being \textbf{draw-free}: every position of $V$ has a winning strategy either for the first player or for the second player. 

Let $\cP_V$ be the set of $\cP$-positions of the graph $V$,
i.e.\ those $x\in V$ with $\cG((V,x))=0$. 
Consider the graph $R(V):=V\setminus\cP_V$ which results from removing the $\cP$-positions from $V$ (and retaining all arcs between remaining vertices). 
More generally, for $k\geq 1$ let $R^{k}(V)$ be the graph resulting from removing all 
vertices $x$ with $\cG((V,x))<k$.

\begin{theorem}\label{thm:reduction}
Let $V$ be a locally finite directed graph,
and let $x\in R(V)$.
\begin{itemize}
\item[(a)]
If $x$ has finite rank in $V$, then also
$x$ has finite rank in $R(V)$;
specifically,
\[
\cG((R(V), x))=\cG((V,x))-1.
\]
\item[(b)]
Suppose additionally that $V$ is draw-free.  
If $x$ has infinite rank in $V$, then
also $x$ has infinite rank in $R(V)$;
specifically, if $\cG((V,x))=\infty(\cA)$ for some $\cA$ (in which case necessarily $0\in\cA$), 
then
\[
\cG((R(V), x))=\infty(\cA-1),
\]
where $\cA-1$ denotes the set $\{a\geq 0: a+1\in\cA\}$.
\end{itemize}
\end{theorem}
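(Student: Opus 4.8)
The plan is to prove both parts using the local characterisation of rank and Sprague--Grundy value in Proposition \ref{prop:usefulSGfacts}, carried out by induction on rank. The guiding heuristic is that deleting the $\cP$-positions of $V$ strips off exactly one ``layer'': a finite-rank option of value $a$ either disappears (if $a=0$) or survives with its value lowered to $a-1$, so a $\mex$ that produced the value $m$ at $x$ should now produce $m-1$.

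For part (a) I would induct on $\rank((V,x))=:n$. Since $x\in R(V)$ we have $\cG((V,x))=:m\geq 1$. Writing down the two conditions of Proposition \ref{prop:usefulSGfacts} that witness $\rank((V,x))\le n$ and $\cG((V,x))=m$, I would translate each into the corresponding condition for $R(V)$ with target value $m-1$. For condition (i): the options $y_i$ of $V$-value $i$ with $1\le i\le m-1$ lie in $R(V)$ and, by the inductive hypothesis applied to them (their $V$-rank is $<n$), have $R(V)$-value $i-1$, which supplies all the values $0,\dots,m-2$ required at $x$ in $R(V)$. For condition (ii): given $w\in\Gamma_{R(V)}(x)$ I would feed $w$ into condition (ii) for $V$; if $w$ has $V$-rank $<n$ and $V$-value $<m$ the inductive hypothesis gives $R(V)$-value $<m-1$, while if instead some $z\in\Gamma_V(w)$ has $V$-value $m$, then $z\in R(V)$ (as $m\ge1$), the inductive hypothesis gives $\cG((R(V),z))=m-1$, and $z$ is still an option of $w$ in $R(V)$. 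This step needs no draw-freeness: options of $x$ of infinite rank are handled purely through the second clause of (ii). Finitely many vertices are involved, so a common rank bound exists and Proposition \ref{prop:usefulSGfacts} yields $\cG((R(V),x))=m-1$.

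For part (b) the substantive point is that the reduction cannot turn an infinite-rank vertex into a finite-rank one. I would establish the converse of (a): by induction on $\rank((R(V),x))$, every $x\in R(V)$ of finite $R(V)$-rank has finite $V$-rank with $\cG((V,x))=\cG((R(V),x))+1$. The one new ingredient is locating a $\cP$-option of $x$ in $V$ to realise the value $0$ demanded by condition (i) for $V$: here draw-freeness is essential, since for $x\in R(V)$ it forces $\cG((V,x))$ to be either a positive integer or $\infty(\cA)$ with $0\in\cA$ (a draw is excluded by Theorem \ref{thm:SpragueGrundyoutcomes}(d)), and in both cases $x$ has an option of value $0$. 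The remaining verifications mirror part (a) with the roles of $V$ and $R(V)$ exchanged. Taking the contrapositive, an infinite-rank $x\in R(V)$ remains infinite-rank in $R(V)$; and that $0\in\cA$ for such $x$ is again the draw-free dichotomy.

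It then remains to identify the option-set. Writing $\cG((R(V),x))=\infty(\cB)$, the options of $x$ in $R(V)$ are exactly its $V$-options of nonzero value: those of finite $V$-rank and value $a\ge1$ contribute $R(V)$-value $a-1$ by part (a), those of infinite $V$-rank contribute no finite value by the converse just proved, and the value-$0$ options have been deleted. Hence $\cB=\{a-1:a\in\cA,\ a\ge1\}$, which is precisely $\cA-1$. The main obstacle is this converse direction in part (b): unlike part (a) it cannot be run by induction on $V$-rank, because infinite-rank vertices furnish no finite rank to descend on, so it must be driven by induction on $R(V)$-rank, and it is exactly there that draw-freeness enters, guaranteeing the $\cP$-option that keeps the $\mex$ computations in the two graphs aligned.
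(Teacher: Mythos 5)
Your proposal is correct and follows essentially the same route as the paper: part (a) by induction on $V$-rank via Proposition \ref{prop:usefulSGfacts}, and part (b) by proving the converse (finite $R(V)$-rank implies finite $V$-rank with value shifted by one) by induction on $R(V)$-rank, with draw-freeness supplying the value-$0$ option. The only differences are cosmetic: you invoke draw-freeness upfront to produce the $\cP$-option rather than as a case split at the end, and you spell out the identification $\cB=\cA-1$, which the paper leaves implicit.
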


If $V$ is not draw-free, then the conclusion 
of part (b) may fail; removing the $\cP$-positions
may convert infinite-rank vertices to 
finite-rank vertices (either $\cP$-positions
or finite-rank $\cN$-positions). See 
Figure \ref{fig:reduction-counterexample}
for an example.

\begin{figure}[ht]
\begin{center}
\begin{tikzpicture}[>=Latex]
\node[draw, circle, label=west:
\textcolor{red}{$\infty(\{1\})$}
] (A) at (0, 0) {a};
\node[draw, circle, label=west:
\textcolor{red}{$\infty(\{2\})$}
] (B) at (0, 2) {b};
\node[draw, circle,
label=east:
\textcolor{red}{$2$}] (C) at (2, 2) {c};
\node[draw, circle,
label=east:
\textcolor{red}{$1$}] (D) at (2, 0) {d}; 
\node[draw, circle,
label=east:
\textcolor{red}{$0$}] (E) at (4, 1) {e}; 

\path[->, thick] 
(A) edge (B) 
(A) edge (D)
(B) edge (C)
(C) edge (D)
(D) edge (E)
(C) edge (E)
(B) edge [out=135, in=45, loop] (B)
;

\node[draw, circle, label=west:
\textcolor{red}{$1$}
] (A1) at (7, 0) {a};
\node[draw, circle, label=west:
\textcolor{red}{$\infty(\{1\})$}
] (B1) at (7, 2) {b};
\node[draw, circle,
label=east:
\textcolor{red}{$1$}] (C1) at (9, 2) {c};
\node[draw, circle,
label=east:
\textcolor{red}{$0$}] (D1) at (9, 0) {d}; 

\path[->, thick] 
(A1) edge (B1) 
(A1) edge (D1)
(B1) edge (C1)
(C1) edge (D1)
(B1) edge [out=135, in=45, loop] (B1)
;

\end{tikzpicture}
\end{center}
\caption{
\label{fig:reduction-counterexample}
The conclusion of Theorem \ref{thm:reduction}(b)
may fail when the graph is not draw-free. 
Here, removing the unique $\cP$-position $e$
from the graph on the left, to give the graph on the right,
converts the position $a$ from infinite rank to 
finite rank. The extended Sprague-Grundy values
are shown by the nodes in red.
}
\end{figure}
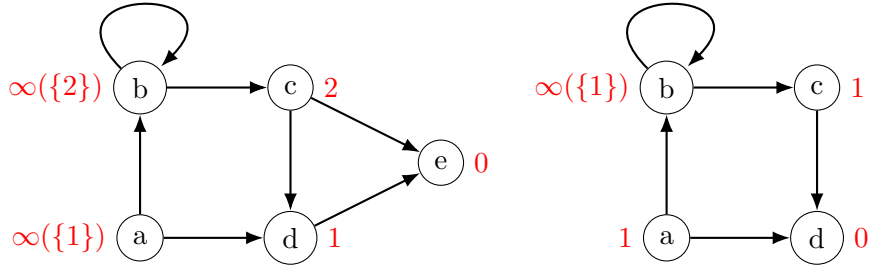

\begin{corollary}\label{corr:Vk} 
Let $k\geq 1$.
\begin{itemize}
\item[(a)]
Suppose that
$V, R(V), \dots, R^{(k)}(V)$ are all draw-free.
Then $R^{(k+1)}(V)=R(R^{(k)}(V))$.
\item[(b)]
$V$ is $k$-stable iff
$V, R(V), \dots, R^{(k)}(V)$ are all draw-free.
\end{itemize}
\end{corollary}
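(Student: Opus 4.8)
The plan is to reduce both parts of the corollary to a single inductive statement that tracks, layer by layer, how the one-shot reduction $R^{(j)}$ relates to iterating $R$, and how Sprague-Grundy values shift in the process. Concretely, I would prove by induction on $j\geq 0$ (with the convention $R^{(0)}(V)=V$) the claim: \emph{if $V, R(V), \dots, R^{(j-1)}(V)$ are all draw-free, then $R^{(j)}(V) = R(R^{(j-1)}(V))$, and moreover for every $x \in R^{(j)}(V)$ one has $\cG((R^{(j)}(V),x)) = \cG((V,x)) - j$ when $x$ has finite rank in $V$, and $\cG((R^{(j)}(V),x)) = \infty(\cA - j)$ when $\cG((V,x)) = \infty(\cA)$.} Here $\cA - j = \{a \geq 0 : a + j \in \cA\}$, matching the notation of Theorem \ref{thm:reduction}(b). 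The point of bundling the value-shift into the claim is that it is exactly the data needed both to run the induction and to read off both parts of the corollary at the end; the base case $j=0$ is trivial.

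For the inductive step I would set $W = R^{(j)}(V)$, which is draw-free by hypothesis, and apply Theorem \ref{thm:reduction} to $W$. Using the level-$j$ value formula, a vertex $x \in W$ is a $\cP$-position of $W$ (that is, $\cG((W,x)) = 0$) precisely when $x$ has finite rank in $V$ with $\cG((V,x)) = j$; every infinite-rank vertex of $V$ has $\cG((W,x)) = \infty(\cA - j) \neq 0$ and so is never removed. Hence deleting the $\cP$-positions of $W$ deletes exactly the vertices with $\cG((V,\cdot)) = j$, which identifies $R(W) = R(R^{(j)}(V))$ with $R^{(j+1)}(V)$. The value formula at level $j+1$ then follows by combining the level-$j$ formula with Theorem \ref{thm:reduction}(a) on the finite-rank vertices and Theorem \ref{thm:reduction}(b) on the infinite-rank ones; for the latter, draw-freeness of $W$ guarantees $0 \in \cA - j$, so part (b) applies and yields $\infty((\cA - j) - 1) = \infty(\cA - (j+1))$.

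With the claim in hand, part (a) of the corollary is the instance $j = k+1$: its hypothesis is exactly that $V, R(V), \dots, R^{(k)}(V)$ are draw-free, and the claim gives $R^{(k+1)}(V) = R(R^{(k)}(V))$. For part (b), the reverse implication is immediate: if all of $V, \dots, R^{(k)}(V)$ are draw-free and $x$ has infinite rank in $V$ with $\cG((V,x)) = \infty(\cA)$, then for each $0 \leq j \leq k$ the claim gives $\cG((R^{(j)}(V),x)) = \infty(\cA - j)$, and draw-freeness of $R^{(j)}(V)$ forces $0 \in \cA - j$, i.e. $j \in \cA$; ranging over $j$ yields $\{0,\dots,k\} \subseteq \cA$, so $V$ is $k$-stable. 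The forward implication I would prove by a parallel induction: $k$-stability makes $V$ itself $0$-stable, hence draw-free, and if $V, \dots, R^{(j)}(V)$ are already known draw-free with $j < k$, the value formula shows every infinite-rank $x$ in $R^{(j+1)}(V)$ has $\cG = \infty(\cA - (j+1))$ with $j+1 \in \cA$ (by $k$-stability), hence $0 \in \cA - (j+1)$ and $x \in \cN$; since finite-rank vertices are never draws, $R^{(j+1)}(V)$ has no $\cD$-positions.

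The main obstacle is bookkeeping the infinite-rank vertices correctly across the iteration, and this is precisely where the draw-free hypotheses are indispensable. The only available single-step control on infinite-rank positions is Theorem \ref{thm:reduction}(b), which requires the graph being reduced to be draw-free; without it, as the counterexample in Figure \ref{fig:reduction-counterexample} shows, removing $\cP$-positions can collapse an infinite-rank vertex to finite rank, breaking both the identity $R(R^{(j)}(V)) = R^{(j+1)}(V)$ and the $\infty(\cA - j)$ value-shift. Keeping the induction hypothesis strong enough (carrying the full value formula, not merely the set identity) is what makes each application of Theorem \ref{thm:reduction}(b) legitimate and lets the $\cA - j$ computations compose cleanly.
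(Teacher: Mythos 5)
Your proof is correct, and it is essentially the argument the paper intends: the corollary is stated without explicit proof precisely because it follows by iterating Theorem \ref{thm:reduction}, which is exactly what your induction formalizes (tracking the value shift $\cG \mapsto \cG - j$ and $\infty(\cA) \mapsto \infty(\cA - j)$ so that the $\cP$-positions of $R^{(j)}(V)$ are identified with the vertices of value $j$ in $V$, and so that draw-freeness licenses each application of part (b)). Nothing is missing; your explicit bookkeeping of where each draw-freeness hypothesis is consumed matches the role Figure \ref{fig:reduction-counterexample} plays in the paper.
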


\begin{proof}[Proof of Theorem \ref{thm:reduction}]
(a) For the first part, we use induction
on the rank of $x$ in $V$.
We claim that if $x\in R(V)$ 
has $\rank((V,x))=n$ 
and $\cG((V,x))=m>0$, then $\rank((R(V), x)\leq n$
and $\cG((R(V),x))=m-1$.

Any $x$ with rank $0$ in $V$ is in $\cP_V$ 
and hence is not a vertex of $R(V)$,
so the claim holds vacuously for $x$ with $\rank((V,x))=0$.

Now for $n>0$, suppose the claim holds for all $x$ with $\rank((V,x))<n$,
and consider $x\in R(V)$ with $\rank((V,x))=n$
and $\cG((V,x))=m$.

From Proposition \ref{prop:usefulSGfacts}
we have the following properties:
\begin{itemize}
    \item[(i)]
    For each $i=0, \dots, m-1$, there exists $y_i
    \in \Gamma_V(x)$ such that $\rank((V,y_i))<n$
and    
    $\cG((V,y_i))=i$;
    \item[(ii)]
    For all $y\in \Gamma_V(x)$, either 
    $\rank((V,y))<n$ and $\cG((V,y))< m$, or
    there is $z\in \Gamma_V(y)$ with $\rank((V,z))<n$ and $\cG((V,z))=m$.
\end{itemize}
Applying the induction hypothesis we get:
\begin{itemize}
    \item[(i)] 
    For each $i=1, \dots, m-1$, there exists $y_i
    \in \Gamma_{R(V)}(x)$ such that $\rank((R(V),y_i))<n$
    and 
    $\cG((R(V), y_i))=i-1$;
    \item[(ii)]
    For all $y\in \Gamma_{R(V)}(x)$,
    either $\rank(R(V),y)<n$ and $\cG((R(V),y))<m-1$,
    or there is $z\in \Gamma_{R(V)}(y)$
    with $\rank((R(V),z))<n$ and $\cG((R(V), z))=m-1$.
\end{itemize}
Using Proposition \ref{prop:usefulSGfacts} again
we conclude that 
$\rank((R(V), x))\leq n$ and
$\cG((R(V), x))=m-1$,
completing the induction step.

(b) Now we suppose that in addition $V$ is draw-free. 
We first want to show that if $x$ has
finite rank in $R(V)$, then it also has
finite rank in $V$.
In this case we work by induction on the rank
of $x$ in $R(V)$. 

If $x$ has rank $0$ in $R(V)$,
i.e.\ $x$ is terminal in $R(V)$,
then all options of $x$ in $V$
are in $\cP_V$, i.e.\ $\cG((V,y))=0$, which 
gives $\cG((V,x))=1$.

Now let $n>1$. Assume that any vertex with rank less than $n$ in $R(V)$ has finite rank in $V$,
and consider any vertex $x$ with rank $n$ in $R(V)$,
say $\cG_{n}((R(V),x))=m$.

Then using Proposition \ref{prop:usefulSGfacts} again,
\begin{itemize}
    \item[(i)]
    There are $y_0, y_1, \dots, y_{m-1}\in \Gamma_{R(V)}(x)$
    such that for each $i$, 
    $\rank((R(V), y_i))<n$ and
    $\cG((R(V), y_i))=i$.
    Then by the induction hypothesis, 
    $\rank((V, y_i))<\infty$, and part (a) gives
    $\cG((V, y_i))=i+1$.
    \item[(ii)]
    For all $y\in \Gamma_{R(V)}(x)$,
    either $\rank((R(V), y))<n$ and
    $\cG((R(V), y))<m$,
    or there is $z\in \Gamma_{R(V)}(y)$ such
    that $\rank((R(V), z))<n$ and
    $\cG((R(V), z))=m$.
    By the induction hypothesis and part (a) again, 
    then either $\cG(V,y)<m+1$ or there is such a $z$
    with $\cG(V,z)=m+1$.
\end{itemize}
Now consider two possibilities.
Either there is $y\in \Gamma_V(x)$ with $\cG((V,y))=0$.
Then for some large enough $n'$ we get
$\cG_{n'}((V,x))=m+1$, and indeed $x$ has finite rank 
in $V$.
Alternatively, there is no such $y$. Then 
if $x$ had infinite rank in $V$,
we would have $\cG((V,x))=\infty(\cA)$ for 
some $\cA$ with $0\notin \cA$. This would contradict
the assumption that $V$ is draw-free. 
Hence again $x$ must have finite rank in $V$, as required.
\end{proof}

\subsection{Mex labellings, and interpretation 
of $k$-stability in the case of trees}
The material in this section is not used
in the later analysis, but it aims to give
helpful intuition about
the notion of $k$-stability in the case of trees,
showing that it can be interpreted in terms
of consistency of the set of vertices labelled
$0,1,\dots, k$ across all labellings
which locally respect the mex recursions. 

Let $V$ be a locally finite directed graph. 
We call a function $f:V\to\NN$
a \textbf{mex labelling} of $V$
if for all $x\in V$, 
$f(x)=\mex\{f(y), y\in \Gamma_V(x)\}$.

Of course, if $V$ is finite and loop-free,
then there is a unique mex labelling $f$
of $V$ given by $f(x)=\cG((V,x))$ for $x\in V$.

Notice also that any locally finite tree has at least one
mex labelling. To see this we can consider
the sequence of finite graphs $(V_n, n\in\NN)$, 
where $V_n$ is the induced subgraph of $V$
containing all vertices $x$ such that
$\height(x)\leq n$.
Each such $V_n$ is finite and loop-free, and so 
has a mex labelling $f_n$. 
In any mex labelling, the vertex $x$ has value
no greater than the out-degree of $x$ 
(which is finite by assumption). 
Then a compactness/diagonalisation argument
shows that there exists a labelling $f:V\to\NN$
which, on any finite subset $W\subset V$, agrees
with infinitely many of the $f_n$. In 
particular, for any vertex $x$, $f$ agrees with
one of the $f_n$ on $\{x\}\cup\Gamma_V(x)$. Then $f$ obeys the mex recursion
at every such vertex $x$, so $f$ is indeed a mex labelling of $V$.

\begin{proposition}\label{prop:labellings}
Let $V$ be a locally finite tree, and $k\in\NN$.
\begin{itemize}
    \item[(a)]
    Suppose $V$ is $k$-stable. 
Then the set $\{x\in V:f(x)=k\}$ is the same for all mex labellings $f$ of $V$,
and is equal to $\{x\in V:\cG((V,x))=k\}$.
    \item[(b)]
    Suppose $V$ is not $k$-stable,
    but is $(k-1)$-stable. 
    (Ignore the 
    vacuous condition of $(k-1)$-stability for $k=0$.)
    Let $x\in V$ with $\cG(x)=\infty(\cA)$ for
    some $\cA$ not containing $k$. Then 
    there are mex labellings $f$ and $f'$ of $V$ with $f(x)=k$,
    $f'(x)\neq k$.
\end{itemize}
\end{proposition}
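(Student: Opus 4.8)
The plan rests on a single observation that rigidifies mex labellings on finite-rank vertices, so I would first prove: \emph{for any locally finite directed graph $V$, any mex labelling $f$, and any finite-rank $x$, one has $f(x)=\cG((V,x))$.} The proof is an induction on $\rank(x)=n$ using Proposition~\ref{prop:usefulSGfacts}. The base $n=0$ is the terminal case, where both sides are $0$. For the step, write $\cG(x)=m$. Part~(i) supplies options $y_0,\dots,y_{m-1}$ of smaller rank with $\cG(y_i)=i$, so by induction $0,\dots,m-1$ all occur among the labels of $\Gamma(x)$. It then suffices to show no option is labelled $m$. A finite-rank option $y$ with $f(y)=m$ would satisfy $\cG(y)=m$ by induction, whence part~(ii) forces an option $z$ of $y$ with $\cG(z)=m$, contradicting that $\cG(y)=m$ leaves $m$ absent from the Grundy values of $y$'s options. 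An infinite-rank option $y$ has, by part~(ii), a smaller-rank option $z$ with $\cG(z)=m$, so $f(z)=m$ by induction and hence $f(y)=\mex\{\dots\}\neq m$. Thus $f(x)=m$. Part~(a) is then immediate: for $k$-stable $V$ and any mex labelling $f$, the lemma gives $f(x)=k\Leftrightarrow\cG(x)=k$ at finite-rank $x$, while an infinite-rank $x$ has $k\in\cA$ (where $\cG(x)=\infty(\cA)$) by $k$-stability, hence a finite-rank option $z$ with $\cG(z)=f(z)=k$, forcing $f(x)\neq k$; so $\{f=k\}=\{x:\cG((V,x))=k\}$ for every $f$.

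For part~(b) I would reduce the problem twice. First, since $V$ is a tree and moves only descend, $f(x)$ depends only on the subtree $T=V_x$, and any mex labelling of $T$ extends to one of $V$ by labelling the subtrees hanging off the path $o\leadsto x$ with arbitrary mex labellings and propagating the mex recursion up the ancestors; so it suffices to realise the two target root-values on $T$. Second, I would reduce $k$ to $0$: for a draw-free graph one checks that $f\mapsto (f-1)\restrict_{R(V)}$ and its inverse (extension by $0$ on $\cP_V$) are mutually inverse bijections between the mex labellings of $V$ and of $R(V)$, with $f(x)=j\Leftrightarrow (f-1)(x)=j-1$. Iterating this $k$ times along the draw-free chain $V,R(V),\dots,R^{(k-1)}(V)$ (available by Corollary~\ref{corr:Vk}(b)) and tracking $\cG$ by Theorem~\ref{thm:reduction}, the goal becomes the case $k=0$: for a draw vertex $x$ (so $\cG(x)=\infty(\cA)$ with $0\notin\cA$), produce mex labellings with $f(x)=0$ and with $f(x)\neq 0$.

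By the outcome characterisation a draw has no $\cP$-option but has a $\cD$-option, so $x$ begins an infinite ray of draws $x=d_0\to d_1\to\cdots$. By the lemma above, an infinite-rank vertex is labelled $0$ iff none of its options is labelled $0$, and a finite option of a draw is never $0$; hence $f(x)=0$ iff no infinite option of $x$ is labelled $0$, and $f(x)\neq 0$ iff some infinite option is. Both targets follow from the single statement $(\star)$: \emph{every draw vertex $w$ admits a mex labelling of $T_w$ with $f(w)=0$.} Indeed, applying $(\star)$ to $w=x$ gives the first labelling; applying $(\star)$ to the $\cD$-option $d_1$ and extending arbitrarily over the other options of $x$ gives $f(x)=\mex(\dots)\neq 0$, since $d_1$ contributes the value $0$. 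To approach $(\star)$ I would build the intended zero-set $Z$ top-down from $w$: put $w\in Z$; whenever $v\in Z$, exclude all options of $v$ from $Z$ (consistent, as a draw has no $\cP$-option and so its finite options are nonzero); whenever $v\notin Z$ is a draw, select one $\cD$-option into $Z$ (possible exactly because every draw has a $\cD$-option) and exclude the rest. This makes $Z$ an independent dominating set (a kernel) of $T_w$ containing $w$, which is precisely the shape that $\{f=0\}$ must take for any mex labelling.

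The remaining and main obstacle is to \emph{realise} this kernel by a genuine mex labelling, i.e.\ to assign consistent nonzero values off $Z$ so that $f(v)=\mex\{f(c):c\in\Gamma(v)\}$ holds everywhere with $\{f=0\}=Z$. The difficulty is that $T_w$ has infinite paths, so there is no well-founded base from which to compute the values from the bottom up: the zeros are prescribed, but the mex cascade above them must still be shown to converge. I would resolve this by the same compactness/diagonalisation scheme that produced mex labellings in the first place, now applied to finite height-truncations equipped with a boundary condition compatible with $Z$ (boundary vertices in $Z$ set to $0$, the others completed by mex), extracting a subsequential limit that agrees with the truncations on every finite neighbourhood and is therefore a genuine mex labelling with $f(w)=0$. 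The technical heart is to choose the truncations so that their boundaries never force a zero where $Z$ forbids one; the infinite ray of draws through $w$ is exactly what supplies the required phase freedom, as in the toy case of a single ray with pendant $\cP$-positions, where the truncated Grundy value at the root oscillates with the parity of the truncation height.
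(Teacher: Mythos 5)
Your part (a) is essentially the paper's proof: the same rank-induction via Proposition~\ref{prop:usefulSGfacts} establishing that $f(x)=\cG((V,x))$ at every finite-rank vertex, followed by the same use of $k$-stability at infinite-rank vertices. Your two preliminary reductions in part (b) are also sound and close to the paper's: the paper likewise reduces $k$ to $0$, by lifting mex labellings of $R^k(V)$ to $V$ (it only needs that one direction, where you prove a bijection), and it likewise handles the fact that $R^k(V)$ is a forest by working component by component. The divergence, and the genuine gap, is at the $k=0$ core. You reduce everything to the claim $(\star)$ (every draw vertex $w$ has a mex labelling of its subtree with $f(w)=0$), and you split $(\star)$ into (i) constructing a kernel $Z\ni w$ and (ii) realising $Z$ as the zero set of a mex labelling; but step (ii) is exactly what you leave open, calling it ``the technical heart'' and gesturing at ``phase freedom'' from the ray of draws. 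That mechanism belongs to a different scheme: parity matters when truncation boundaries are genuine game boundaries (all terminal, hence forced to take label $0$), but in that scheme the kernel $Z$ plays no role at all. In your scheme no parity is needed: truncate at height $n$, label boundary vertices in $Z$ by $0$ and boundary vertices off $Z$ by $1$ (these need not satisfy any mex condition in the truncation), and compute mex upward; independence and domination of $Z$ give, by downward induction on height, that the zero set at heights $<n$ is exactly $Z$, and the diagonal limit is then a genuine mex labelling of $T_w$ with $f(w)=0$. As written, your proposal pins down neither this nor any other working realisation. A secondary gap: your top-down construction of $Z$ only provides domination for excluded \emph{draws}; excluded $\cN$-positions are dominated because every $\cP$-position is forced into $Z$ (by your own part-(a) lemma), and one must check that this forced part together with your selected draws is independent --- true, since $\cP$-positions have only $\cN$-options and draws have no $\cP$-options --- but none of this appears in your construction.

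For comparison, the paper's $k=0$ argument needs no kernel: if $x$ is drawn in $V$, then in the truncation $V_n$ (all height-$n$ vertices made terminal) $x$ is a $\cP$-position when $n-\height(x)$ is even and an $\cN$-position when it is odd, by playing the two drawing strategies; applying the compactness/diagonalisation argument separately along even and odd $n$ then produces both required labellings $f(x)=0$ and $f'(x)>0$ at once. (Your route needs an extra step even after $(\star)$: you obtain $f'$ by applying $(\star)$ at a $\cD$-option $d_1$, which is a correct idea the paper does not need.) In summary: your architecture is viable and genuinely different at its core, but the proof of $(\star)$ is incomplete as submitted; it can be completed by the free-boundary realisation described above, or replaced wholesale by the paper's parity argument.
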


Note that the conclusion of part (b) can fail
even for graphs which are acyclic in the sense
of having no directed cycles. 
See Figure \ref{fig:mex-counterexample} for
an example. (The method of proof below makes clear
that the result does extend to bipartite
graphs with no directed cycles.)

\begin{figure}[ht]
\begin{center}
\begin{tikzpicture}[>=Latex]
\node[draw, circle, 
label=north:\textcolor{red}{$1$},
label=south:\textcolor{blue}{$2$}
] (A0) at (0, 0) {$a_0$};

\node[draw, circle, 
label=north:\textcolor{red}{$0$},
label=south:\textcolor{blue}{$0$}
] (B0) at (2, 1) {$b_0$};

\node[draw, circle, 
label=north:\textcolor{red}{$2$},
label=south:\textcolor{blue}{$1$}
] (A1) at (4, 0) {$a_1$};

\node[draw, circle, 
label=north:\textcolor{red}{$0$},
label=south:\textcolor{blue}{$0$}
] (B1) at (6, 1) {$b_1$};

\node[draw, circle, 
label=north:\textcolor{red}{$1$},
label=south:\textcolor{blue}{$2$}
] (A2) at (8, 0) {$a_2$};

\node[draw, circle, 
label=north:\textcolor{red}{$0$},
label=south:\textcolor{blue}{$0$}
] (B2) at (10, 1) {$b_2$};

\node[draw, circle, 
label=north:\textcolor{red}{$2$},
label=south:\textcolor{blue}{$1$},
label=east:.\,.\,.\,.
] (A3) at (12, 0) {$a_3$};

\path[->, thick] 
(A0) edge [bend left=20] (B0)
(A0) edge [bend right=10] (A1)
(B0) edge [bend left=20] (A1)
(A1) edge [bend left=20] (B1)
(A1) edge [bend right=10] (A2)
(B1) edge [bend left=20] (A2)
(A2) edge [bend left=20] (B2)
(A2) edge [bend right=10] (A3)
(B2) edge [bend left=20] (A3)
;

\end{tikzpicture}
\end{center}
    \caption{
    An example showing the conclusion
    of Proposition \ref{prop:labellings}(b)
    can fail even for ``loop-free" graphs 
    (i.e.\ graphs with no directed cycle).
    The directed graph with vertex set 
    $\{a_i, i\in \NN\}\cup\{b_i, i\in\NN\}$,
    and arcs from $a_i$ to $b_i$, from $a_i$ to $a_{i+1}$,
    and from $b_i$ to $b_{i+1}$ for each $i$.
    There are two mex labellings, one shown in red
    above the vertices and the other shown in blue below 
    the vertices. 
    Every position has Sprague-Grundy value $\infty(\emptyset)$, and the graph is not $0$-stable. 
    However, the positions $b_i$ have value
    $0$ in both mex labellings, while the positions $a_i$
    have non-zero values in both mex labellings. 
    \label{fig:mex-counterexample}}
\end{figure}
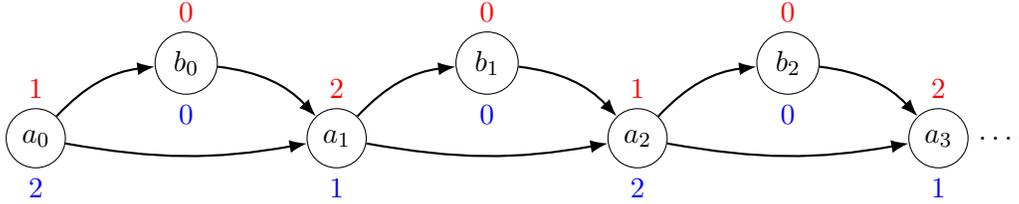

\begin{proof}
We start by proving that if $x\in V$ has finite rank with $\cG(x)=m$, then $f(x)=m$ for all mex labellings $f$ of $V$. (This holds for any locally finite directed graph $V$.)

We proceed by induction on $\rank(x)$. Let $f$ be any mex labelling of $V$. 

If $\rank(x)=0$, then $x$ has no options. Then $\cG(x)=0$, and so
$f(x)=\mex(\emptyset)=0$. 

Now suppose $\rank(x)=n>0$ and $\cG(x)=m$, and that the statement holds for all vertices of rank less than $n$. 

From Proposition \ref{prop:usefulSGfacts}, for each $i$ with $0\leq i\leq m-1$, 
there exists $y_i\in \Gamma(x)$ with $\cG(y_i)=i$ and $\rank(y_i)<n$.
Hence $f(y_i)=i$. 

Also for every $y\in\Gamma(x)$ with $\cG(y)\geq m$, there is $z\in\Gamma(y)$
with $\rank(z)<n$ and $\cG(z)=m$. Then $f(z)=m$, and hence $f(y)\ne m$. 

Thus $x$ has options on which $f$ takes value $0,1,\dots, m-1$, but no option on which $f$ takes value $m$. This gives $f(x)=m$ as required. 

To complete the proof of part (a), suppose that $V$ is $k$-stable, and let $f$ be any mex labelling of $V$. Then any vertex $x$ with infinite rank has $\cG(x)=\infty(\cA)$ for some $\cA$ with $k\in\cA$. Hence there exists $y\in\Gamma(x)$ with $\cG(y)=k$, giving $f(y)=k$. Then $f(x)\ne k$. 
So indeed, the set of vertices $x$ with $f(x)=k$ is exactly the set of $x$ with $\cG(x)=k$. 

We turn to part (b), starting with the case $k=0$. Suppose that $V$ is a locally finite tree
which is not $0$-stable. Let $x$ be any vertex with $\cG(x)=\infty(\cA)$
for some $\cA$ not containing $0$ (that is, $x\in\cD$). 

Take any $n\geq \height(x)$. 
Since the game from position $x$ is drawn,
if we consider the game on the truncated graph 
$V_n$ described just before the statement of the
proposition, so that all vertices at height $n$ become terminal, then position $x$ becomes a first-player win 
if $n-\height(x)$ is odd, and a second-player win
if $n-\height(x)$ is even. 

Then we can apply again the compactness argument mentioned 
before the statement of Proposition \ref{prop:labellings}, separately
for odd $n$ and even $n$. This yields
two mex labellings $f$ and $f'$, one of which gives
value $0$ to $x$, and the other of which gives
a strictly positive value to $x$, as required. This completes the proof of part (b) in the case $k=0$. 

Now we extend to $k>0$. Suppose $V$ is $(k-1)$-stable but not $k$-stable. 
As in Corollary \ref{corr:Vk}, 
we can apply the reduction operator $k-1$ times, 
removing all the vertices $y\in V$ with $\cG((V,y))<k$,
to arrive at the graph $R^k(V)$.

Any $v\in\R^k(V)$ either has $\cG((V,x))=m$ for some finite $m\geq k$, 
or $\cG((V,x))=\infty(\cA)$ for some $\cA$ with $\{0,\dots,k-1\}\subseteq\cA$. 
It is then easy to check that whenever $\hat{f}: R^k(V)\mapsto \NN$ is a mex labelling of $R^k(V)$,
we can obtain a mex labelling $f:V\mapsto \NN$ of $V$ by defining
\begin{equation}\label{fhattof}
f(x)=\begin{cases}
\cG((V,x)),&\text{if }\cG((V,x))<k\\
\hat{f}(x)+k,&\text{otherwise}.
\end{cases}
\end{equation}

Let $x\in V$ with $\cG(V,x)=\infty(\cA)$ for some $\cA$ containing $0,\dots, k-1$ but not $k$. 
Then, by applying Theorem \ref{thm:reduction} $k$ times, we have $x\in R^k(V)$ and $\cG(R^k(V), x))=\infty(\cB)$ where $\cB=\cA-k$. In particular, 
$0\notin\cB$ (that is, the position $x$ in $R^k(V)$ is a draw). We wish to show that there are mex labellings $f$, $f'$ of $V$ such that
$f(x)=k$ and $f'(x)\ne k$. In light of (\ref{fhattof}), it is enough to show that there are 
mex labellings $\hat{f}, \hat{f}'$ of $R^k(V)$ such that $\hat{f}(x)=0$ and $\hat{f}'(x)>0$. 

Since $x$ is a draw in $R^k(V)$, we would like to use the same approach as in the $k=0$ case. The situation is more complicated since the graph $R^k(V)$ may not be connected. However, 
the graph $R^k(V)$ is a union of finitely or countably many disjoint trees. Any labelling
which restricts to a mex labelling of each tree component is a mex labelling of the whole graph. 
So it suffices to find mex labellings of the tree component of $R^k(V)$ which contains $x$,
one of which assigns value $0$ to $x$ and another of which assigns strictly positive value
to $x$. This indeed can be done using the same compactness argument used in the $k=0$ case.

This completes the proof of part (b). 
\end{proof}

\section{Random game-trees}\label{sec:GW}

\subsection{\BGW{} trees}
A \textbf{\BGW{}} (or \textbf{Bienaym{\'e}}) \textbf{branching process} is constructed as follows. We fix some 
\textbf{offspring distribution} which is a probability distribution $\bp=(p_k, k\in\NN)$
on the non-negative integers. The process begins with a single individual, called
the root. The root individual has a random number of children, distributed
according to the offspring distribution, which form generation $1$. Then each of 
the members of generation $1$  has a number of children according to the offspring distribution,
forming generation $2$, and so on. All family sizes are independent. 
See for example \cite{GrimmettWelshbook} for a basic introduction, 
and \cite{LeGalltreesurvey} for much more depth including a rigorous construction. 

We derive a directed graph from the process by regarding each individual as a vertex,
and putting an arc to each child from its parent. In this way
each vertex of the graph has in-degree $1$, except for the root which has in-degree $0$.
We call the resulting graph a \textbf{\BGW{} tree}. This tree has a natural
self-similarity property: conditional on the number of the children of the root
being $k$, the subtrees rooted at those children are independent and each one has
the distribution of the original \BGW{} tree.

We assume always that $p_0>0$, so that the tree can have terminal vertices. 

A key role in what follows will be played by the \textbf{probability generating function}
of the offspring distribution, defined by
\[
\phi(s)=\sum_{k\geq 0}p_k s^k.
\]
The function $\phi$ is strictly increasing on the interval
$[0,1]$, and maps $[0,1]$ bijectively to the interval $[p_0, 1]$.

A fundamental result is a criterion for the tree to be infinite, in terms 
of the mean $\mu=\sum_{k\geq 0} k p_k = \phi'(1)$ of the offspring distribution $\bp$.
Excluding the trivial case $p_1=1$
(where with probability $1$ the tree consists of a 
single path) one has that whenever $\mu\leq 1$, the tree
is finite with probability $1$, and whenever $\mu>1$, there is positive probability for the tree to be infinite. 

If $d=\sup\{k:p_k>0\}$ is finite, we say the 
offspring distribution has 
\textbf{maximum out-degree $d$}. Otherwise we say 
that the offspring distribution has 
unbounded vertex degrees. 

\subsection{\BGW{} games}\label{sec:GWgames1st}
We will consider \textbf{\BGW{} games},
i.e.\ games whose directed graph is a \BGW{} tree $T$.

We start with a very simple lemma which helps 
simplify the language:

\begin{lemma}\label{lem:GWequivalences}
Consider a \BGW{} tree $T$, with root $o$.
Let $\cC$ be any set of possible Sprague-Grundy values. 
The following are equivalent:
\begin{itemize}
    \item[(a)]
    $\PP(\cG((T,o))\in\cC)>0$;
    \item[(b)]
    $\PP(\cG((T, u))\in\cC \text{ for some }u\in T)>0$.
    \end{itemize}
\end{lemma}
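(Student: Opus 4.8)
The plan is to prove the two implications separately. The direction (a) $\Rightarrow$ (b) is immediate, and (b) $\Rightarrow$ (a) is the substantive one. The tool throughout is the self-similarity (branching property) of the \BGW{} tree, combined with the observation that, since $T$ is a tree, the value $\cG((T,u))$ of a position $u$ depends only on the positions reachable from $u$, i.e.\ only on the subtree of descendants of $u$ (the game played from $u$ has exactly this subtree as its game-graph). For (a) $\Rightarrow$ (b) there is nothing to do: the root $o$ is always a vertex of $T$, so the event $\{\cG((T,o))\in\cC\}$ is contained in the event $\{\cG((T,u))\in\cC\text{ for some }u\in T\}$, whence (a) implies (b) directly.

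For (b) $\Rightarrow$ (a) I would argue by contraposition, assuming $q:=\PP(\cG((T,o))\in\cC)=0$ and showing that the event in (b) then has probability $0$. I index the potential vertices of $T$ by the countable set of Ulam--Harris labels (finite sequences of positive integers), so that for each label $u$ the event $\{u\in T\}$ is determined by the ancestral line of $u$ and the family sizes along that line. For fixed $u$, the branching property gives that, conditional on $\{u\in T\}$, the subtree of descendants of $u$ is distributed as an independent copy of $T$, while the event $\{u\in T\}$ itself is measurable with respect to information disjoint from that subtree. Since $\cG((T,u))$ is a function of that subtree alone, this decoupling yields
\[
\PP\big(u\in T,\ \cG((T,u))\in\cC\big)=\PP(u\in T)\cdot\PP\big(\cG((T,o))\in\cC\big)=\PP(u\in T)\cdot q.
\]

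Summing over the countably many labels $u$ and using countable subadditivity,
\[
\PP\big(\cG((T,u))\in\cC\text{ for some }u\in T\big)\le\sum_{u}\PP\big(u\in T,\ \cG((T,u))\in\cC\big)=q\sum_{u}\PP(u\in T).
\]
If $q=0$ every summand vanishes, so the left-hand side is $0$ and (b) fails; this establishes (b) $\Rightarrow$ (a). The one step requiring care is the conditional-independence claim: under the Ulam--Harris encoding one must verify that $\{u\in T\}$ and the descendant subtree of $u$ decouple exactly as the branching property asserts, and that $\cG((T,u))$ genuinely depends on nothing outside that subtree. Note that the possible infiniteness of $\sum_u\PP(u\in T)=\EE|T|$ causes no difficulty, precisely because the bound is only invoked when $q=0$, in which case every term is zero regardless of the value of the sum.
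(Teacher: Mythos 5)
Your proof is correct and follows essentially the same route as the paper's: the trivial direction plus a contrapositive argument combining the self-similarity (branching property) of the \BGW{} tree with the countability of the vertex set and countable (sub)additivity. The paper states this in one sentence; your version simply makes explicit the Ulam--Harris decoupling of $\{u\in T\}$ from the descendant subtree and correctly handles the potential $0\cdot\infty$ issue when $\EE|T|=\infty$.
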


For example, the tree $T$ is draw-free with probability $1$
iff the probability that the root is drawn is $0$.
So we do not need to distinguish carefully between
saying that ``the tree has draws with positive probability"
and that ``the root is drawn with positive probability".
More generally, the tree $T$ is $k$-stable with probability $1$ iff the probability that $\cG(T,o)=\infty(\cA)$ for some $\cA$ not containing $\{0,1,\dots,k\}$ is $0$. 

\begin{proof}[Proof of Lemma \ref{lem:GWequivalences}]
Trivially (a) implies (b). 
On the other hand, if (a) fails, so that
that $\PP(\cG(T,o))\in\cC=0$, then the self-similarity
of the \BGW{} tree, the fact that the tree
has at most countably many vertices, and the countable
additivity of probability measures, combine to give 
that $\PP(\cG((T, u))\in\cC \text{ for some }u\in T)=0$
also, so that (b) also fails. 

\end{proof}

The question of when a \BGW{}
game has positive probability to be a draw
was considered in \cite{GaltonWatsongames}.

Let $\cP_n$ be the set of vertices from which the second player has a winning strategy
that guarantees to win within $2n$ moves ($n$ by each player), and let $P_n$ be the 
probability that $o\in\cP_n$. Note that $o\in\cP_n$ iff for every child $u$ 
of $o$, $u$ itself has a child in $\cP_{n-1}$. This leads to the following recursion
for the probabilities $P_n$ in terms of the generating function:
\begin{equation}\label{Pnrecursion}
P_n=1-\phi(1-\phi(P_{n-1})).
\end{equation}
Now let $P$ be the probability that $o\in\cP$. We have $P=\lim_{n\to\infty}P_n$. 
Taking limits in (\ref{Pnrecursion}), and using the fact
that the generating function $\phi$ is continuous and increasing on $[0,1]$,
we obtain part (a) of the following result. 
A similar approach involving the probability of winning strategies for 
the first player gives part (b). For full details, see \cite{GaltonWatsongames}.

\begin{proposition}[Theorem 1 of \cite{GaltonWatsongames}]\label{prop:draws}
Define a function $h:[0,1]\to[0,1]$ by 
\begin{equation}\label{hdef}
h(s)=1-\phi\big(1-\phi(s)\big).
\end{equation}
\begin{itemize}
    \item[(a)] 
    $P:=\PP((T,o)\in\cP)$ is the smallest fixed point of $h$ in $[0,1]$.
    \item[(b)]
    If $N:=\PP((T,o)\in\cN)$, then $1-N$ is the largest fixed point of $h$ in $[0,1]$.
\end{itemize}
\end{proposition}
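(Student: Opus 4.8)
The plan is to realise each of $P$ and $1-N$ as a monotone limit of iterates of the map $h$ from \reff{hdef}, and then to invoke the elementary principle that iterating a continuous non-decreasing self-map of $[0,1]$ converges, from below to the \emph{smallest} fixed point and from above to the \emph{largest}. First I would record the two soft properties of $h$ that drive everything. Since $\phi$ is continuous and strictly increasing on $[0,1]$, the composition $h$ in \reff{hdef} is continuous and non-decreasing on $[0,1]$; moreover, using $\phi(0)=p_0\ge 0$ and $\phi(1)=1$, one checks $h(0)\ge 0$ and $h(1)\le 1$, so $h$ maps $[0,1]$ into itself and, by the intermediate value theorem applied to $h(s)-s$, has at least one fixed point. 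The recursion \reff{Pnrecursion} and the identity $P=\lim_n P_n$ are already established in the excerpt, so I may take them as given rather than re-deriving them.

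For part (a) I would use $P_n=h(P_{n-1})$ together with $P=\lim_n P_n$. Taking the iteration to start from $P_0=0$, we have $P_1=h(0)\ge 0=P_0$, and the monotonicity of $h$ then propagates this by induction, so $(P_n)$ is non-decreasing; being bounded it converges, and continuity of $h$ lets me pass to the limit to obtain $h(P)=P$. To see that $P$ is the \emph{smallest} fixed point, let $s$ be any fixed point in $[0,1]$: since $P_0=0\le s$ and $h$ is non-decreasing, induction gives $P_n=h^{(n)}(P_0)\le h^{(n)}(s)=s$ for every $n$, whence $P=\lim_n P_n\le s$. Thus $P$ lies weakly below every fixed point, as required.

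For part (b) I would run the dual argument with winning strategies for the first player. Let $\tilde P_n$ be the probability that the player to move at the root \emph{fails} to force a win within $2n$ moves; an entirely analogous application of the self-similarity and generating-function bookkeeping shows that $(\tilde P_n)$ obeys $\tilde P_n=h(\tilde P_{n-1})$, now started from the maximal value $\tilde P_0=1$, and that $\tilde P_n\downarrow 1-N$. Since $h(1)\le 1$ and $h$ is non-decreasing, $(\tilde P_n)$ is non-increasing and converges to a fixed point of $h$; and for \emph{any} fixed point $s$ we have $s\le 1=\tilde P_0$, so monotonicity forces $s\le\tilde P_n$ for all $n$ and hence $s\le 1-N$. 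Thus $1-N$ dominates every fixed point of $h$, i.e.\ it is the largest.

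The algebraic ingredients above — that the two sequences satisfy $h$-recursions and that $h(0)\ge 0$, $h(1)\le 1$ — are routine. The step I expect to be the genuine obstacle, and the only one that really uses local finiteness rather than merely the generating-function identity, is the passage from the time-bounded winning probabilities to the true outcome probabilities, namely $P=\lim_n P_n$ and $1-N=\lim_n\tilde P_n$. This rests on the fact that a forced win on a locally finite game-tree is always secured within a \emph{uniformly bounded} number of moves: the plays consistent with a fixed winning strategy form a finitely branching tree all of whose branches terminate, so by K\"onig's lemma that tree is finite and the horizon is bounded. Consequently $\cP=\bigcup_n\cP_n$, together with the analogous statement for the first player, which is precisely what legitimises the two limits; granted these, the identification with the smallest and largest fixed points is the soft monotone-iteration argument given above.
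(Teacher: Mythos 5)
Your overall route is the same as the paper's (and as that of the cited reference): bounded-horizon win probabilities obeying a generating-function recursion, identification of the limiting outcome probabilities via K\"onig's lemma, and the standard monotone-iteration characterisation of the extreme fixed points of a continuous non-decreasing self-map of $[0,1]$. You are right that the K\"onig step ($\cP=\bigcup_n\cP_n$ and its analogue for $\cN$) is where local finiteness genuinely enters, and you make this more explicit than the paper does; as abstract fixed-point reasoning, your monotone-iteration arguments are fine.

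The genuine gap is in the generating-function bookkeeping, precisely at the step you dismiss as ``entirely analogous'' in part (b). Passing to complements conjugates the iteration map by $s\mapsto 1-s$: if $N_n$ denotes the probability that the first player can force a win making at most $n$ of their own moves, then $N_n=1-\phi\big(1-\phi(N_{n-1})\big)=h(N_{n-1})$ with $N_0=0$, and hence $\tilde P_n=1-N_n$ satisfies $\tilde P_n=1-h(1-\tilde P_{n-1})=\phi\big(1-\phi(1-\tilde P_{n-1})\big)$, which is \emph{not} $h(\tilde P_{n-1})$ in general. Likewise, the characterisation ``$o\in\cP_n$ iff every child of $o$ has a child in $\cP_{n-1}$'' translates to $P_n=\phi\big(1-\phi(1-P_{n-1})\big)$, so the recursion \reff{Pnrecursion}, which you took on faith in part (a), is also inconsistent with $h$ as defined in \reff{hdef}. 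Executed correctly, your own plan proves that $N$ is the \emph{smallest} fixed point of $h$ and $1-P$ is the \emph{largest}; equivalently, the proposition holds with $h$ replaced by its conjugate $\phi\big(1-\phi(1-s)\big)$, i.e.\ with the roles of $P$ and $N$ interchanged relative to the printed statement. That this is the correct version can be checked directly: for $p_0=p_1=\tfrac12$ the tree is a path of geometric length, the root is a $\cP$-position iff that length is even, so $P=\tfrac23$ and $N=\tfrac13$, while $h(s)=(1+s)/4$ has the unique fixed point $\tfrac13$. So your write-up lands on the printed claim only because the false recursion you assert in (b) compensates exactly for the mis-stated recursion \reff{Pnrecursion} you inherit in (a); neither step survives direct verification, and a correct execution of your strategy would have exposed the discrepancy in the statement itself rather than confirmed it.
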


\begin{corollary}
\label{corr:draw-criterion}
    $D:=\PP((T,o)\in\cD)=1-N-P$ is positive iff the function $h$ defined by (\ref{hdef}) has more than one fixed point in $[0,1]$.
\end{corollary}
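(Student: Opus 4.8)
The plan is to deduce the corollary directly from Proposition \ref{prop:draws}, together with the trivial observation that the three outcome probabilities sum to one. First I would record that every position of the tree lies in exactly one of the classes $\cN$, $\cP$, $\cD$, so that $N+P+D=1$ and hence $D=1-N-P=(1-N)-P$. The whole point is then that the two quantities on the right are exactly the extreme fixed points of $h$: part (a) of Proposition \ref{prop:draws} identifies $P$ as the \emph{smallest} fixed point of $h$ in $[0,1]$, while part (b) identifies $1-N$ as the \emph{largest} fixed point. Thus $D$ equals the difference between the largest and the smallest fixed point of $h$ in $[0,1]$.

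Next I would note that $h$ is continuous and maps $[0,1]$ into itself (since $\phi$ does), so $h$ has at least one fixed point and the smallest and largest fixed points are well defined, with the smallest at most the largest. Writing $D$ as their difference makes the equivalence immediate. If $h$ has a unique fixed point, then the smallest and largest coincide and $D=0$. Conversely, if $h$ has two distinct fixed points $s_1<s_2$, then the smallest fixed point is at most $s_1$ and the largest is at least $s_2$, so the largest strictly exceeds the smallest and $D>0$. This yields $D>0$ iff $h$ has more than one fixed point in $[0,1]$, as required.

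Since the substantive work has already been packaged into Proposition \ref{prop:draws}, I expect essentially no technical obstacle: the argument is a short bookkeeping deduction. The only points needing a moment's care are justifying $D=1-N-P$, which is just the partition of positions into the three outcome classes, and being precise about the phrase ``more than one fixed point'' --- that is, translating the strict inequality (largest exceeds smallest) into the existence of at least two distinct fixed points. Both are routine, so the real content of the corollary lies entirely in the preceding proposition.
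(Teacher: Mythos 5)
Your proof is correct and follows exactly the route the paper intends: the corollary is stated as an immediate consequence of Proposition \ref{prop:draws}, with $D=(1-N)-P$ being the gap between the largest and smallest fixed points of $h$, which is positive precisely when those fixed points are distinct. The paper leaves this deduction implicit, and your write-up simply makes that bookkeeping explicit, including the (correct) observations that the outcome classes partition the positions and that the extreme fixed points exist by continuity of $h$.
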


Note that $h$ defined in (\ref{hdef}) is the second
iteration of the function $1-\phi$. The function $1-\phi$
is continuous and strictly decreasing, mapping $[0,1]$
to $[1-p_0,0]$. It follows that $1-\phi$ has precisely
one fixed point in $[0,1]$, and that fixed point is 
also a fixed point of $h$. So Corollary  \ref{corr:draw-criterion} tells us that the game has positive
probability of draws iff $h$ has further fixed points
which are not fixed points of $1-\phi$.

Two particular families of offspring distributions had been
considered earlier. The Binomial$(2,p)$ case was studied
by Holroyd in \cite{Ander-thesis}. The case of 
the Poisson offspring family is closely related 
to the analysis of the \textit{Karp-Sipser algorithm}
used to find large matchings or independent sets of
a graph, which was introduced by Karp and Sipser in 
\cite{KarpSipser}; the link to games is not described
explicitly in that paper, but the choice of notation
and terminology makes clear that the authors were aware of it.

One particular focus of \cite{GaltonWatsongames}
was on the nature of the phase transitions 
between the set of offspring distributions
without draws, and the set of offspring distributions
with positive probability of draws. This transition 
can be either continuous or discontinuous. 
Without going into precise details, we illustrate
with a couple of examples. 

\begin{example}[Poisson distribution -- continuous phase
transition]\label{ex:Poissondraw}
The Poisson($\lambda$) offspring family was considered in 
Proposition 3.2 of \cite{GaltonWatsongames}. 
The game has probability $0$ of a draw if $\lambda\leq e$,
and positive probability of a draw if $\lambda>e$. 
The phase transition is illustrated in Figure
\ref{fig:Poissondraw}. For $\lambda\leq e$, 
the function $h$ has only one fixed point, 
while for $\lambda>e$, $h$ has three fixed points. 
The additional fixed points emerge continuously
from the original fixed point as $\lambda$ goes above $e$.
Note that the probability of a draw at the critical point 
itself is $0$; more strongly, we have the draw probability $\PP(o\in\cD)$ is a continuous function of $\lambda$.
\end{example}

\begin{figure}
\begin{center}
\includegraphics[width=0.45\linewidth]{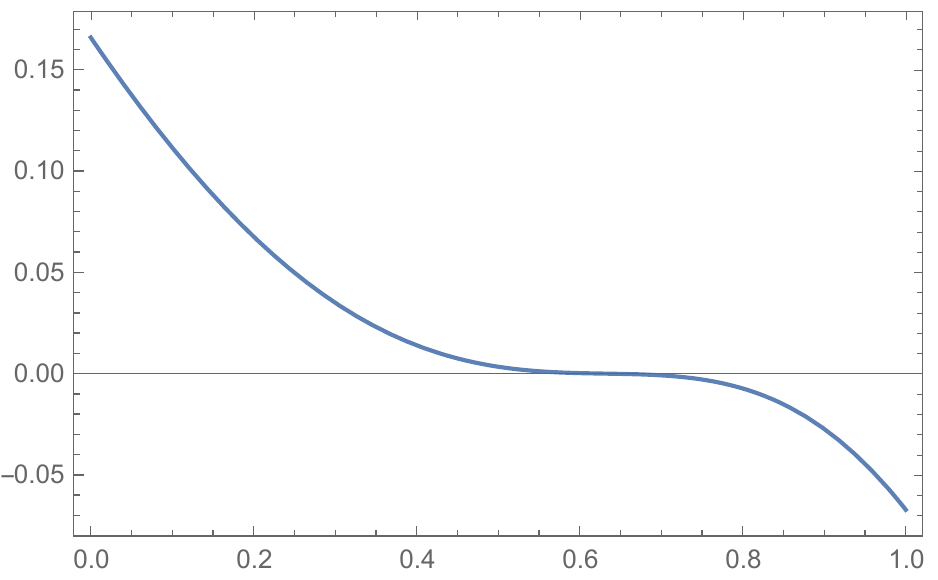}
\includegraphics[width=0.45\linewidth]{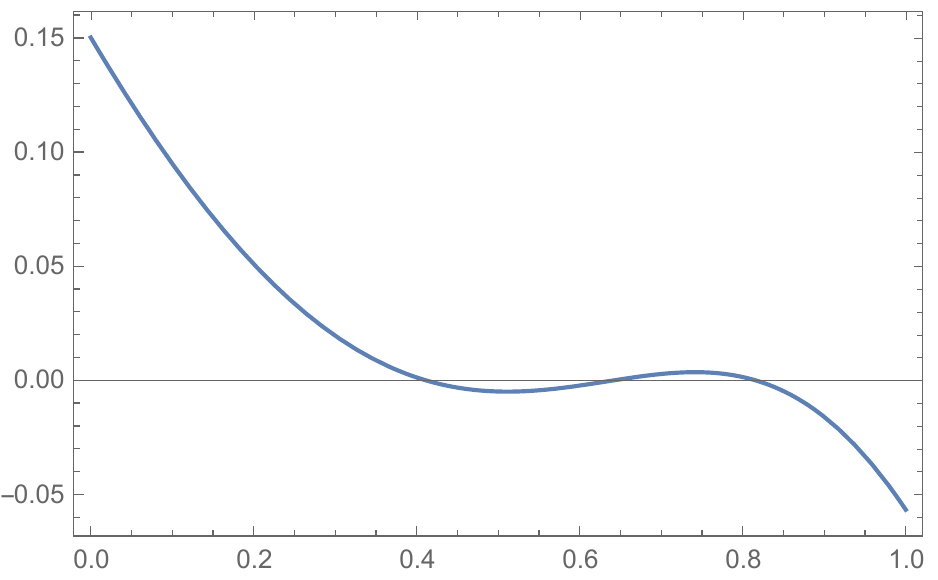}
\end{center}
\caption{
\label{fig:Poissondraw}
An illustration of the phase transition 
from the non-draw to the draw region, 
for Poisson($\lambda)$ offspring distributions
(see Example \ref{ex:Poissondraw}).
The two plots show the function $h(s)-s$ for $s\in[0,1]$,
where $h$ is defined by (\ref{hdef}). 
The fixed points of $h$ are those $s$
where $h(s)-s$ crossing the horizontal axis.
On the left, $\lambda=2.7$, just below the critical 
point $\lambda=e$; the function $h$ has just one 
fixed point. On the right, $\lambda=2.8$, just 
above the critical point; now $h$ has three fixed points. 
}
\vspace{1cm}
    \centering
    \includegraphics[width=0.45\linewidth]{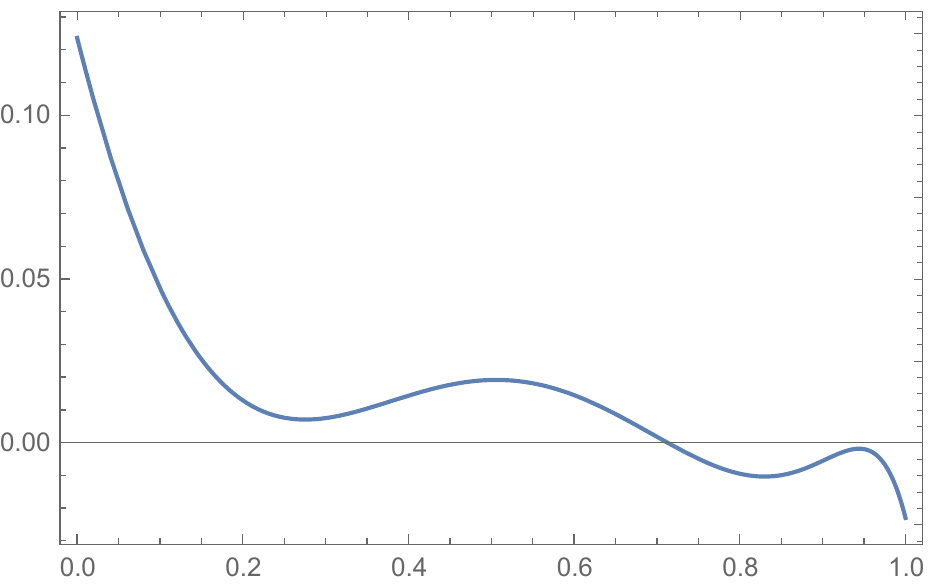}
\includegraphics[width=0.45\linewidth]{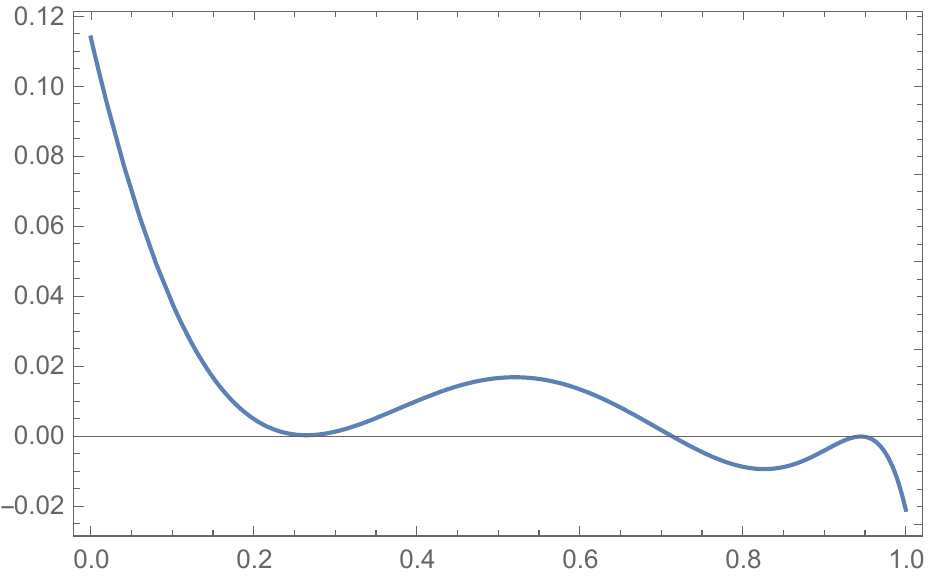}
    \caption{
The phase transition for the
family of offspring distributions given in
Example \ref{ex:discontinuous}, with
$p_0=1-a, p_2=a/2, p_{10}=a/2$. 
Again the function $h(s)-s$ is shown for $s\in[0,1]$.
On the left $a=0.977$, and on the right $a=0.979\approx a_c$.
Unlike in Figure \ref{fig:Poissondraw}, 
at the critical point there are already 
multiple fixed points of $h$; at $a_c$, the draw probability 
jumps from $0$ to a positive value around 
$0.681$, which is the distance between the 
minimum and maximum fixed points of $h$. 
\label{fig:discontinuous}}
\end{figure}

\begin{example}[A discontinuous phase transition]
\label{ex:discontinuous}
Consider a family of offspring distributions with
$p_0=1-a$, $p_2=a/2$, $p_{10}=a/2$, where $a\in[0,1]$. 
This family is used in the proof of Proposition 4(i) of 
\cite{GaltonWatsongames}. Again there is some critical 
point $a_c\approx 0.979$ such that there is positive probability of a draw for $a>a_c$ and not for $a<a_c$.
However, unlike in the Poisson case above, at the critical 
point itself, the function $h$ already has three fixed points,
and the probability $\PP(o\in\cD)$ jumps discontinuously 
from $0$ for $a<a_c$ to approximately $0.61$ at $a=a_c$
itself. The difference in the nature of the emergence of
the additional fixed points of $h$ can be 
seen by comparing Figure~\ref{fig:Poissondraw} and Figure~\ref{fig:discontinuous}.
\end{example}

\subsection{Existence of infinite rank vertices in \BGW{} games}

Now we go beyond the question of whether 
draws have positive probability, to ask 
more generally about the extended Sprague-Grundy values that occur in a \BGW{} game. 
A specific question will be whether, 
when draws are absent, there are still 
some infinite rank positions. 
As suggested by Corollary \ref{corr:Vk},
we can investigate the $k$-stability 
of the tree $T$ by looking 
at whether draws occur for the reduced trees $R^k(T)$.
The reduction operator behaves particularly
nicely in the setting of a \BGW{} tree.

\begin{theorem}\label{thm:phi1}
Consider a \BGW{} tree $T$ whose
offspring distribution $(p_n, n\geq 0)$ 
has probability
generating function $\phi$.

Suppose the tree is draw-free with probability 
$1$. Let $P$ be the probability that the root 
$o$ is a $\cP$-position. 

Condition on the event $\OO(o)=\cN$, 
and consider the graph obtained by removing all the $\cP$-positions. 
Let $T^{(1)}$ denote the component connected 
to the root $o$ in this graph. 
Then $T^{(1)}$ is again a \BGW{} tree
rooted at $o$, whose 
offspring distribution
has probability generating function 
given by
\begin{equation}\label{phi1}
\phi^{(1)}(s)= \frac{1}{1-P} 
\left[
\phi\big(P+s(1-P)\big) - \phi\big(s(1-P)\big)
\right].
\end{equation}
\end{theorem}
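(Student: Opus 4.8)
The plan is to exhibit $T^{(1)}$ directly as a \BGW{} tree by checking the two ingredients of the branching property: that the root of $T^{(1)}$ has offspring law with probability generating function $\phi^{(1)}$, and that, conditionally on the root's offspring number, the subtrees hanging below those offspring are i.i.d.\ copies of $T^{(1)}$. Throughout I would use the draw-free hypothesis to put every vertex into the dichotomy $\cP$ versus $\cN$. Since $T$ is draw-free, the outcome recursion gives that a vertex lies in $\cP$ exactly when all its children lie in $\cN$, and the outcome class of any vertex $v$ is a function only of the subtree rooted at $v$. Combined with the self-similarity of $T$, this makes the children of any fixed vertex independently $\cP$ with probability $P$ and $\cN$ with probability $1-P$; in particular $\PP(o\in\cN)=1-P$, and summing $(1-P)^k$ against $p_k$ yields the relation $P=\phi(1-P)$. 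In the reduced graph $R(T)=T\setminus\cP_T$ the surviving children of a surviving vertex $x$ are exactly its $\cN$-children, and these all lie in the component of $o$; hence in $T^{(1)}$ the children of $x$ are precisely the $\cN$-children of $x$ in $T$.

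Next I would establish the branching property. Conditioning on $o\in\cN$ is conditioning on $o$ having at least one $\cP$-child. Writing $K$ for the number of children of $o$ and assigning each child a type in $\{\cP,\cN\}$, the event $\{o\in\cN\}$ is a function of the type vector alone; and, given $K$ and the type vector, the subtrees rooted at the children are independent, with each $\cN$-child's subtree distributed as a copy of $T$ conditioned to have an $\cN$-root. Therefore, conditionally on $o\in\cN$ and on the set of $\cN$-children being $v_1,\dots,v_j$, the subtrees of $T$ rooted at the $v_i$ are i.i.d.\ copies of ``$T$ conditioned on an $\cN$-root''. Applying the reduction inside each such subtree, and using that the reduction is a purely local operation on subtrees, the $T^{(1)}$-subtrees below $v_1,\dots,v_j$ are i.i.d.\ copies of $T^{(1)}$. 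Since every vertex of $T^{(1)}$ is itself an $\cN$-position, the same description applies at every vertex, which is exactly the self-similarity required of a \BGW{} tree rooted at $o$.

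Finally I would read off the offspring generating function by a thinning calculation. Letting $J$ be the number of $\cN$-children of $o$, the fact that $J$ is $\Bin(k,1-P)$ given $K=k$ gives $\EE[s^J]=\phi\big(P+s(1-P)\big)$, while $o\notin\cN$ forces all children to be $\cN$, i.e.\ $J=K$, so $\EE\big[s^J\ind(o\notin\cN)\big]=\EE\big[s^J\ind(J=K)\big]=\phi\big(s(1-P)\big)$. Subtracting and dividing by $\PP(o\in\cN)=1-P$ yields
\[
\phi^{(1)}(s)=\EE\big[s^J\,\big\vert\,o\in\cN\big]=\frac{1}{1-P}\Big[\phi\big(P+s(1-P)\big)-\phi\big(s(1-P)\big)\Big],
\]
as claimed, and the identity $P=\phi(1-P)$ confirms the consistency check $\phi^{(1)}(1)=1$. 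The one step needing care is the branching property: conditioning on $o\in\cN$ correlates the children's types, so one must verify this does not disturb the conditional i.i.d.\ structure of the surviving subtrees. This is the main obstacle, and it is handled precisely by conditioning first on the full type vector, observing that $\{o\in\cN\}$ is measurable with respect to it, before performing the reduction.
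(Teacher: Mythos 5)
Your proposal is correct and takes essentially the same route as the paper: the draw-free hypothesis makes each child of a vertex independently a $\cP$-position (probability $P$) or an $\cN$-position (probability $1-P$), and conditioning on an $\cN$-root and retaining only $\cN$-children exhibits $T^{(1)}$ as a \BGW{} tree whose offspring law is that of the number of $\cN$-children of the root given the root is $\cN$. The only differences are cosmetic: you spell out the branching-property verification (conditioning on the type vector) that the paper asserts briefly, and you extract the generating function via the thinning identity $\EE [s^J]=\phi\big(P+s(1-P)\big)$ together with $\EE\big[s^J\ind(J=K)\big]=\phi\big(s(1-P)\big)$, whereas the paper computes the offspring probabilities $p^{(1)}_m$ explicitly and then sums using the binomial theorem.
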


\begin{proof}
Since we assume that $T$ has no draws, each vertex of $T$ is either a $\cP$-position or an $\cN$-position. 
The type of a vertex is determined by the subtree
rooted at that vertex. Conditional on the number
of children of the root, the subtrees rooted at each child are independent and each have the same
distribution as the original tree. In particular,
each child is independently
a $\cP$-position with probability $P$, and an $\cN$-position with probability $1-P$.

This gives us a two-type \BGW{} process.
We have the familiar recursion that a vertex is an $\cN$-position iff at least one of its children is a $\cP$-position. 

We condition on the root being of type $\cN$, 
and retain only its $\cN$-type children,
and the $\cN$-type children of those children, 
and so on. This gives a one-type \BGW{} process,
and its offspring distribution is the 
distribution of the number of $\cN$-type children
of the root in the original process, conditional 
on the root having type $\cN$.

The probability that the root has $k$ $\cP$-children
and $m$ $\cN$-children is 
\[
p_{m+k} \Ch{m+k}{k} P^k (1-P)^m.
\]
We can sum over $k\geq 1$ to obtain
the probability that the root
is of type $\cN$ and has $m$ $\cN$-children.
Finally, we can condition on the event that the root has type 
$\cN$ (which has probability $1-P$), to obtain 
that the conditional probability that the root has $m$ $\cN$-children given that it has type $\cN$ is
\[
p^{(1)}_m:=\frac{1}{1-P}\sum_{k=1}^\infty p_{m+k} \Ch{m+k}{k} P^{k}(1-P)^m.
\]
Finally we want to calculate the probability
generating function 
$\phi^{(1)}(s):=\sum_{m\geq 0} s^m p^{(1)}_m$
of this distribution. This can easily be
done using the binomial theorem to arrive
at the form given in (\ref{phi1}).
\end{proof}

Combining Corollary \ref{corr:Vk} and Theorem \ref{thm:phi1} is the key to 
studying the infinite-rank vertices
of our \BGW{} tree $T$; 
see the strategy described at the beginning of 
Section \ref{sec:examples}.

We finish this section with a result about the possible infinite Sprague-Grundy values that can occur in a \BGW{} game. 
Essentially, the value $\infty(\cA)$
has positive probability to appear for every finite $\cA$
which is not ruled out either by $k$-stability or by
finite maximum vertex degree.
Most notably, part (a)(i) says that 
for a tree which has draws and for which the offspring
distribution has infinite support, \textit{all}
finite $\cA$ have positive probability.

\begin{proposition}\label{prop:possiblevalues}
Consider the game on a \BGW{} tree. 
\begin{itemize}
\item[(a)]
Suppose there is positive probability of a draw. 
\begin{itemize}
    \item[(i)]
If the vertex degrees are unbounded, 
then for any finite $\cA\subset \NN$, 
there is positive probability that $\cG(o)=\infty(\cA)$.
\item[(ii)]
If the maximum out-degree is $d$, then 
there is positive probability that $\cG(o)=\infty(\cA)$
iff $\cA\subset\{0,1,\dots,d\}$ with $|\cA|\leq d-1$.
\end{itemize}
\item[(b)]
For $k\geq 1$, suppose that the tree
is $(k-1)$-stable with probability $1$, but has
positive probability not to be $k$-stable. 
\begin{itemize}
    \item[(i)]
If the vertex degrees are unbounded, 
then for any finite $\cA\subset \NN$
which contains $\{0,\dots, k-1\}$, 
there is positive probability that $\cG(o)=\infty(\cA)$.
\item[(ii)]
If the maximum out-degree is $d$, then 
there is positive probability that $\cG(o)=\infty(\cA)$
iff $\{0,1,\dots, k-1\}\subseteq\cA\subset\{0,1,\dots,d\}$ with $|\cA|\leq d-1$.
\end{itemize}
\end{itemize}
\end{proposition}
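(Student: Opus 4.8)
The plan is to prove part (a) by an explicit local construction and then to deduce part (b) from part (a) by applying the reduction operator $k$ times. By Lemma~\ref{lem:GWequivalences} it is enough in every case to exhibit, with positive probability, \emph{some} vertex $u$ with $\cG(u)=\infty(\cA)$; I will always build a suitable $u$ near the root. I first record the two necessity (``only if'') statements, which hold because in a tree a vertex of infinite rank must have at least one child of infinite rank: indeed, if every child of $u$ had finite rank, then for $m=\mex\{\cG(y):y\in\Gamma(u)\}$ Proposition~\ref{prop:usefulSGfacts} would show that every child with value exceeding $m$ has a grandchild of value $m$, forcing $\cG(u)=m$ to be finite. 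Consequently, if $\cG(u)=\infty(\cA)$ then the finite-rank children number at most $(\text{out-degree})-1\le d-1$, so $|\cA|\le d-1$, and since the mex of at most $d$ values is at most $d$ we get $\cA\subseteq\{0,\dots,d\}$; the constraint $\{0,\dots,k-1\}\subseteq\cA$ in (b) is just $(k-1)$-stability.

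For the sufficiency in part (a), fix an admissible $\cA$ and set $m=\mex(\cA)$. The construction of $u$ has three ingredients. First, by induction on $a$ (base case $p_0>0$), there is positive probability that a \BGW{} subtree has root of finite value $a$, for every $a\le d$ (for every $a$ when the degrees are unbounded): choose an out-degree $D\ge a$ with $p_D>0$ and force the $D$ children to realise a set of values equal to $\{0,\dots,a-1\}$, which gives mex $a$ and finite rank. Second, and this is the key point, there is positive probability that a subtree has root $w$ which is a draw with \emph{no child of value $m$}: taking an out-degree $D\ge1$ with $p_D>0$, ask that one child be a draw (positive probability, since draws occur) and that every child have value in $\NN\setminus\{0,m\}$ or be a draw; such children occur with probability at least the draw probability, so the whole event has positive probability, and the outcome characterisation (Theorem~\ref{thm:SpragueGrundyoutcomes}) makes $w$ a draw. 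Third, assemble: let $u$ have an out-degree $D'$ with $p_{D'}>0$ and $D'\ge|\cA|+1$ (using $p_d>0$ in case (ii), unbounded degrees in case (i)), with $|\cA|$ children realising the distinct values of $\cA$, one child equal to a draw $w$ as above, and the remaining children repeating a value of $\cA$ (or, if $\cA=\emptyset$, all children being such draws $w$). Then the finite values among $u$'s children are exactly $\cA$, so the only candidate finite value for $u$ is $m$; but condition (ii) of Proposition~\ref{prop:usefulSGfacts} fails at $u$ because $w$ has infinite rank and no child of value $m$, whence $u$ has infinite rank and $\cG(u)=\infty(\cA)$. This has positive probability, proving (a).

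For part (b), suppose $T$ is $(k-1)$-stable but not $k$-stable. By Corollary~\ref{corr:Vk}(b) the trees $T,R(T),\dots,R^{(k-1)}(T)$ are draw-free, so Theorem~\ref{thm:phi1} applies at each of the first $k$ reduction steps and shows that the root-component $T^{(k)}$ of $R^{(k)}(T)$ is again a \BGW{} tree, whose generating function is the $k$-fold iterate of the transform~\reff{phi1}. This $T^{(k)}$ has positive probability of draws (because $R^{(k)}(T)$ does, as $T$ is not $k$-stable). A short check shows that each reduction lowers the maximum out-degree by exactly one --- a $\cN$-vertex has at least one $\cP$-child, hence at most $d-1$ surviving children --- so $T^{(k)}$ has maximum out-degree $d-k$ (and unbounded degrees if $T$ does). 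Now apply Theorem~\ref{thm:reduction}(b) $k$ times: a vertex $u$ has $\cG((T,u))=\infty(\cA)$ with $\{0,\dots,k-1\}\subseteq\cA$ if and only if $u$ survives to $R^{(k)}(T)$ with $\cG((R^{(k)}(T),u))=\infty(\cB)$, where $\cB=\cA-k=\{a\ge0:a+k\in\cA\}$. As $\cA$ ranges over the sets admissible in (b), $\cB$ ranges over exactly the sets admissible for $T^{(k)}$ in part~(a) (the condition $\{0,\dots,k-1\}\subseteq\cA$ becomes vacuous, $\cA\subseteq\{0,\dots,d\}$ becomes $\cB\subseteq\{0,\dots,d-k\}$, and $|\cA|\le d-1$ becomes $|\cB|\le(d-k)-1$). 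Applying part (a) to $T^{(k)}$ and translating back via Lemma~\ref{lem:GWequivalences} then gives (b).

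The main obstacle is the sufficiency in part (a), and specifically the second ingredient: manufacturing an infinite-rank child $w$ that is a draw \emph{and} has no child of value $m$, since it is exactly this missing value that prevents $u$ from collapsing to finite rank. The surrounding bookkeeping --- keeping the finite child-values of $u$ equal to $\cA$ while respecting the available out-degrees, and, in part (b), tracking how both the admissible family of sets $\cA$ and the maximum out-degree transform under the $k$ reductions --- is routine but must be carried out with care.
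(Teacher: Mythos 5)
Your proof is correct and follows essentially the same route as the paper: realize each finite value by induction, give the root children realizing exactly $\cA$ together with one infinite-rank child having no finite-rank option of value $m$ (your draw $w$, exhibited via the all-children-are-draws event, is precisely the paper's $\infty(\emptyset)$ vertex), and deduce (b) from (a) by passing to the $k$-fold reduced tree $T^{(k)}$ using Theorem~\ref{thm:phi1} and Theorem~\ref{thm:reduction}. The extra bookkeeping you supply (the maximum out-degree dropping by exactly one per reduction, and the necessity of $\cA\subseteq\{0,\dots,d\}$) is detail the paper leaves implicit, not a different method.
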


\begin{proof}
First we note that all finite Sprague-Grundy values 
have positive probability, up to the maximum out-degree $d$ if there is one. This is easy by induction. We know that value $0$ is possible since any terminal position has value $0$.
If values $0,1,\dots, k-1$ are possible, and 
it is possible for the root to have degree $k$ or larger, 
then there is positive probability that the set of values of
the children of the root is precisely $\{0,1,\dots, k-1\}$,
giving value $k$ to the root as required. 

Now for part (a), since draws are possible, 
the value $\infty(\cB)$ has positive probability for 
some $\cB$ not containing $0$. In that case, there is
positive probability
for all the children of the root to have value $\infty(\cB)$,
and then the root has value $\infty(\emptyset)$.

So the value $\infty(\emptyset)$ has positive probability. 
Now if $\cA$ is any finite set such that the number
of children of the root can be as large as $|\cA|+1$,
then there is positive probability that the set of values of the children of the root is precisely $\cA\union\{\infty(\emptyset)\}$, and in that case 
the value of the root is $\infty(\cA)$ as required. 

Finally, if $|\cA|$ is greater than or equal to the maximum
degree, then the value $\infty(\cA)$ is impossible, 
since any vertex with such a value must have
at least one child with value $m$ for each $m\in\cA$, and 
additionally at least one child with infinite rank.

We can derive the result for part (b) by 
applying part (a) to the \BGW{} tree $T^{(k)}$
obtained by conditioning the root to have 
Sprague-Grundy value not in $\{0,1,\dots, k-1\}$, 
and removing all the vertices with values $\{0,1,\dots, k-1\}$
from the graph, as described above. Theorem 
\ref{thm:reduction} tells us that if 
the resulting tree has positive probability to have a node
with value $\infty(\cA)$, then the original tree has 
positive probability to have a node with value 
$\infty(\cB)$ where $\cB=\{b\geq k: b-k\in \cA\}\cup\{0,1,\dots, k-1\}$, and the desired results follow. 
\end{proof}

\begin{remark}\label{rem:allpossible}
Suppose we have a \BGW{} tree
$T$ with positive probability to be infinite, 
and a set $\cC$ of Sprague-Grundy values with
$\PP(\cG(o)\in\cC)>0$. A straightforward extension
of Lemma \ref{lem:GWequivalences} says that 
conditional on $T$ being infinite, with probability $1$
there exists $u\in T$ with $\cG(u)\in\cC$.

Combining with Proposition \ref{prop:possiblevalues},
we get the following appealing property. 
If $T$ has unbounded vertex degrees, and 
positive probability of draws, then conditional 
on $T$ being infinite, with probability $1$,
vertices with every possible 
extended Sprague-Grundy value are found in the tree.
\end{remark}

\section{Examples}\label{sec:examples}
First we lay out how to use the results
of the previous sections to address
the question of which 
infinite-rank Sprague-Grundy values
have positive probability for a
given \BGW{} tree $T$.

Let $\phi$ be the probability generating function
of the offspring distribution of $T$.
To examine whether $T$ can have draws, 
we apply the criterion given 
Corollary \ref{corr:draw-criterion}:
$T$ is draw-free with probability $1$ iff the function
$h(s)=1-\phi(1-\phi(s))$ has a unique fixed point.

If so, we use the procedure in Theorem \ref{thm:phi1}.
We condition the root to be a $\cN$-position,
we remove all the $\cP$-positions, and we retain
the connected component of the root, 
to obtain a new \BGW{} tree $T^{(1)}$
with an offspring distribution 
whose probability generating function is $\phi^{(1)}$.
We then examine whether or not this new generating function 
gives a draw-free tree.

If it does, we can repeat the procedure again,
producing a new generating function which we call
$\phi^{(2)}$, corresponding to removing the 
positions with Sprague-Grundy values $0$ and $1$
from the original tree. 

If we perform $k$ reductions and still 
have a draw-free tree at every step, this tells
us that our original tree was $k$-stable with 
probability $1$. 

If the iteration of this procedure never produces
a tree with positive probability of a draw, then
the original tree had probability $0$ of having 
infinite-rank vertices. 
(Note that for example if at any step 
we arrive at a tree which is sub-critical,
i.e.\ whose offspring distribution has mean less
than or equal to $1$ and which therefore has
probability $1$ to be of finite size, 
then we know that every further reduction 
must give rise to a draw-free tree.)

We now apply this strategy to  
a few different examples
of families of offspring distributions. We see
a surprising range of types of behaviour.

\begin{example}[Poisson case, continued]
\label{ex:Poisson}
\BGW{} trees with Poisson offspring distribution behave particularly nicely under the graph reduction operation. 
This allows us to give a complete analysis of
the Poisson case without any need for calcuations or
numerical approximation. 

The tree has positive probability to be infinite 
iff $\lambda>1$. We already saw in 
Example \ref{ex:Poissondraw} that
there is positive probability of a draw iff $\lambda>e$.

Suppose we are in the $\lambda\leq e$ case without draws.
So each node is a $\cP$-node (with probability $P$) 
or a $\cN$-node (with probability $1-P$).

By basic properties of the Poisson distribution, the
number of $\cP$-children of the root is Poisson$(\lambda P)$-distributed,
and the number of $\cN$-children of the root is Poisson$(\lambda(1-P))$-distributed,
and the two are independent.

If we condition the root to have at least one $\cP$-child, and then remove all its $\cP$-children, then because of the independence of the number of $\cP$-children and the number of
$\cN$-children, we are simply left with a Poisson($\lambda(1-P)$) number of children. 

So we again have a Poisson \BGW{} tree, but now with
a new parameter $\lambda^{(1)}<\lambda$.
Since $\lambda^{(1)}<e$, the new tree is still draw-free with probability $1$.

Hence, to adapt the terminology of the introduction,
in the Poisson case we may see a ``blatantly infinite" game once $\lambda>e$, 
but for $\lambda\leq e$ we are at worst ``latently infinite". There is no $\lambda$ which gives ``patently infinite"
behaviour whereby draws are absent
but infinite rank vertices have positive probability.
\end{example}

\begin{example}[Degrees $0$ and $4$]
\label{ex:04}
We return to the example in the introduction,
where all outdegrees are $0$ or $4$. 
We have $p_4=p$ and $p_0=1-p$ for some $p\in(0,1)$.

If $p\leq a_0:=1/4$ then the mean offspring size is 
less than or equal to $1$, and the tree is finite with 
probability $1$. 

One can show algebraically that 
there is positive probability of a draw
iff $p>a_2:=5^{3/4}\approx 0.83593$.
Namely one can obtain that the function $h$
defined in (\ref{hdef}) has derivative 
less than $1$ on $[0,1]$ for all $p\leq a_2$
(except for a single point in the case $p=a_2$),
and so $r$ has just one fixed point for such $p$. 
Meanwhile for $p>a_2$ there is a fixed point $s^*$ 
of the function $1-\phi$ for which $h'(s^*)>1$, and this 
can be used to show that $r$ has at least two 
further fixed points. Corollary \ref{corr:draw-criterion}
then gives the result. 

Between $a_0$ and $a_2$ there exist no draws, but the
tree is infinite with positive probability, so we may 
ask whether there can exist positions with infinite rank. 

Numerically, we observe a phase transition around
the point $a_1\approx 0.52198$. For $p\leq a_1$, 
we know that the tree $T$ has zero probability of a draw,
and we observe that the same is also true for the trees
$T^{(1)}$ and $T^{(2)}$ (their maximum out-degrees are
$3$ and $2$ respectively, so their generating functions
$\phi^{(1)}$ and $\phi^{(2)}$
are cubic and quadratic respectively. The tree $T^{(3)}$ 
has vertices of out-degrees only $0$ and $1$, and 
will also be finite with probability $1$, so we do not need
to examine $T^{(k)}$ for any higher $k$.)

Hence for $p\in(a_0,a_1]$, we have the ``latently infinite"
phase where all Sprague-Grundy values are finite with 
probability $1$.

However, for $p\in(a_1,a_2]$ we observe that
the function $h^{(1)}(s):=1-\phi^{(1)}(1-\phi^{(1)}(s))$
has more than one fixed point. Consequently, 
there is positive probability of a draw in the tree
$T^{(1)}$. The tree $T$ has positive probability not to be
$1$-stable, and so to have positions of infinite rank. 

The behaviour of $h$, $h^{(1)}$ and $h^{(2)}$ 
around the phase transition point $p=a_1$ is 
shown in Figure \ref{fig:04graphs}.
Although the precise nature and location of this
phase transition is only found
numerically, it is not hard to show rigorously
that for $p$ just above  $a_0$, 
the functions $h^{(1)}$ and $h^{(2)}$ have only
one fixed point, while for $p$ just below $a_2$, 
the function $h^{(1)}$ has more than one fixed point,
so that the family of distributions does display
all four of the 
``finite", ``latently infinite", ``patently infinite"
and ``blatantly infinite" types of behaviour.

\begin{figure}
    \centering
        \includegraphics[width=0.32\linewidth]{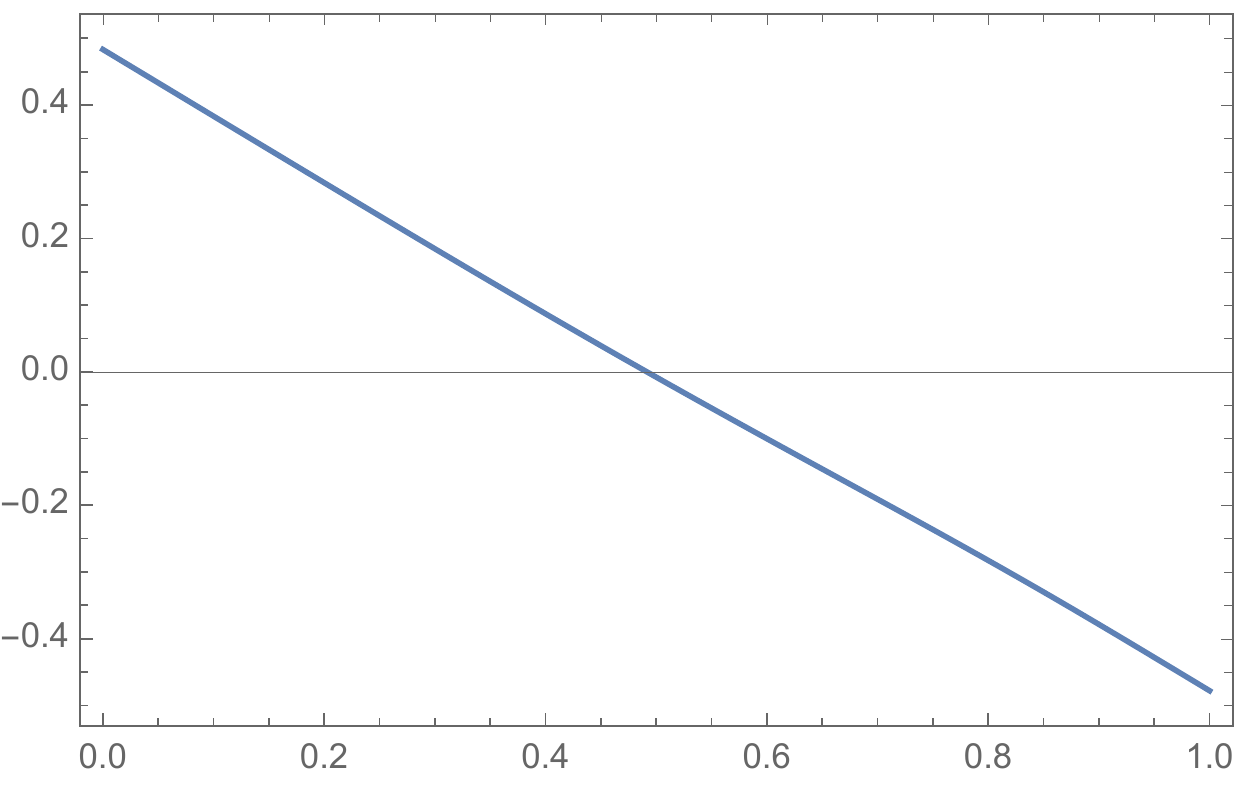}
        \includegraphics[width=0.32\linewidth]{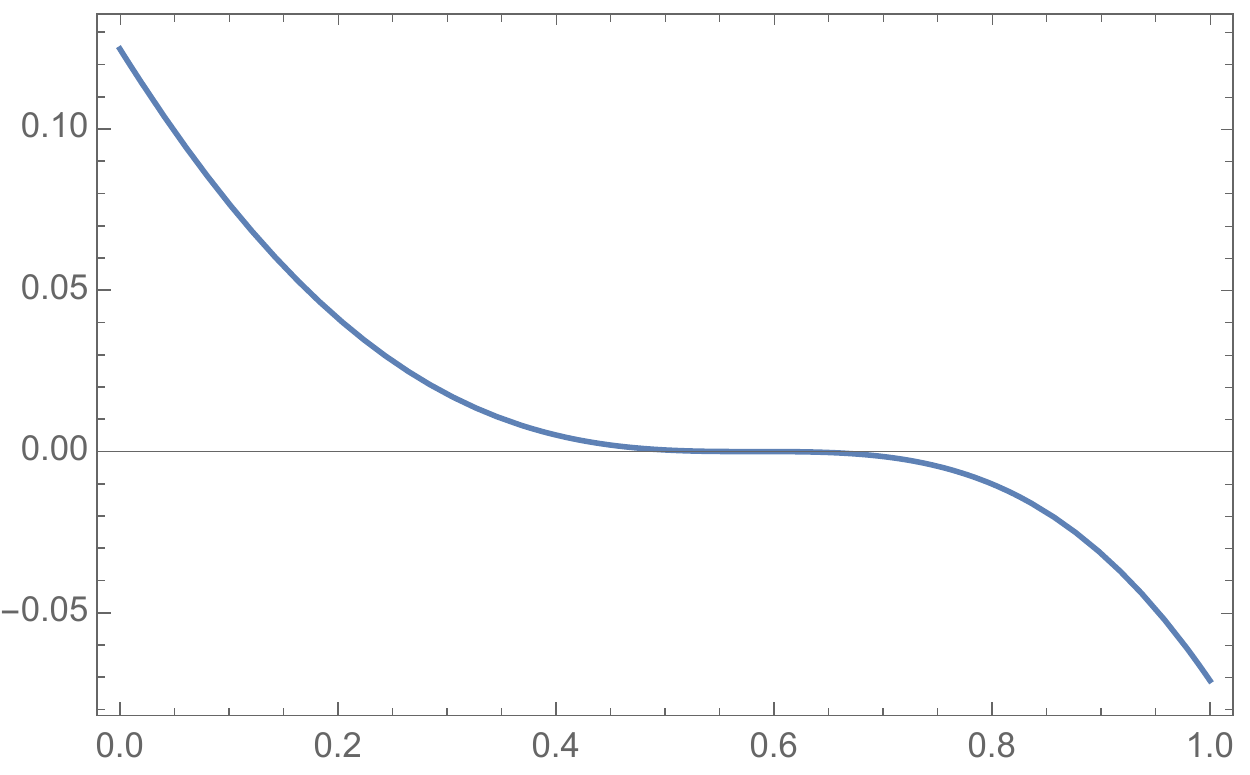} \includegraphics[width=0.32\linewidth]{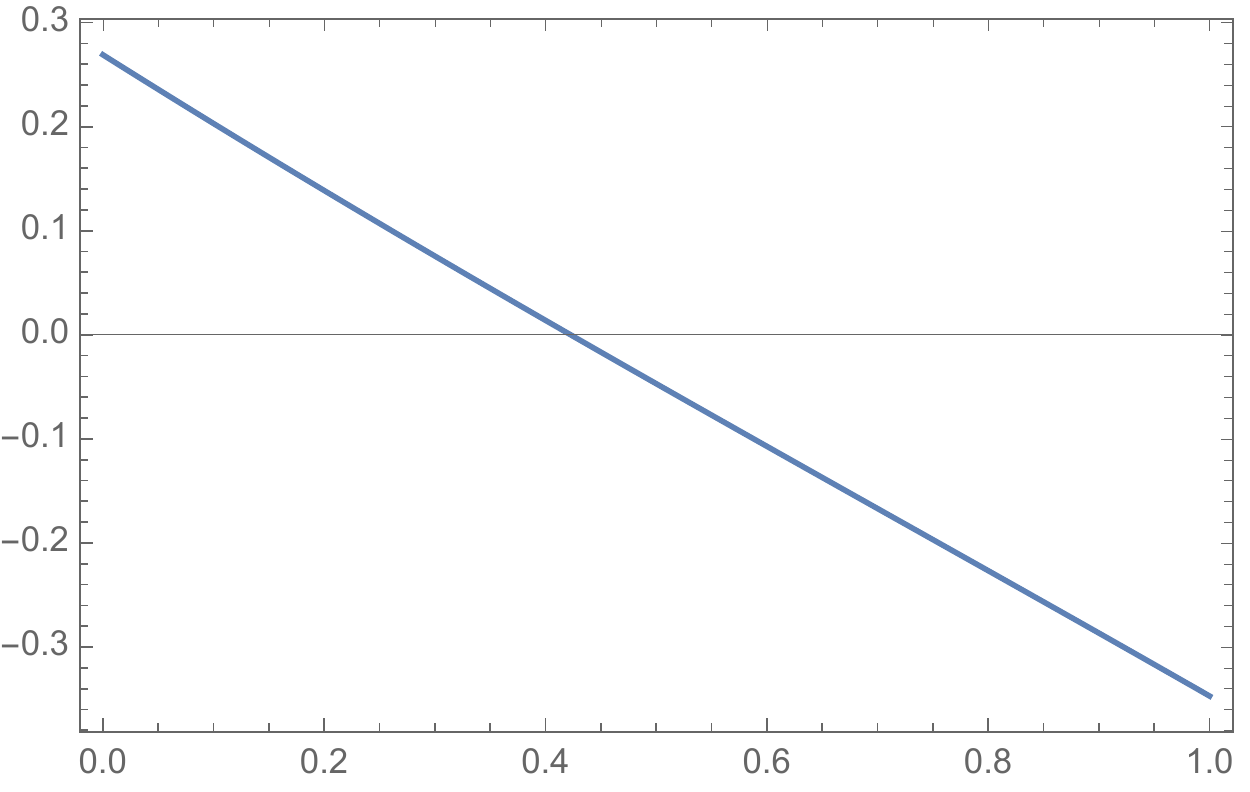}
    \caption{The case of the $0$-or-$4$ distribution from 
    Example \ref{ex:04}, with $p=0.52198\approx a_1$.
    From left to right the three graphs show the functions
    $h(s)-s$, $h^{(1)}(s)-s$, and $h^{(2)}(s)-s$.
    As $p$ moves through the critical point $a_1$, 
    the function $h^{(1)}$ acquires multiple fixed points. 
    For $p\leq a_1$, the tree has only finite-rank vertices. 
    For $p>a_1$, the tree no longer has probability $1$ to 
    be $1$-stable, and for example the Sprague-Grundy value $\infty({0})$ has positive probability. 
    \label{fig:04graphs}}
\end{figure}
\end{example}

\begin{example}[Geometric case]\label{ex:geometric}
We now consider the family of geometric offspring
distributions, with $p_k=q^k(1-q)$ for
$k=0,1,2,\dots$, for some $q\in(0,1)$. 

Rather surprisingly, there is no $q$ for which 
draws have positive probability! See for example
Proposition 3(iii) of \cite{GaltonWatsongames}. 
(This shows for example that
the property of having positive probability of draws
is not monotone in the offspring distribution. 
If we take any $\lambda>e$, then as discussed above, the Poisson($\lambda$)
distribution has positive probability of draws, 
but for $q$ sufficiently large, this distribution
is stochastically dominated by a Geometric($q$)
distribution, which does not have draws.)

However, other interesting
phase transitions for the geometric family
do occur. Numerically, we observe that there are 
critical values
$q_0=1/2, q_1\approx 0.88578, q_2\approx 0.88956, q_3\approx 0.923077$ such that:
\begin{itemize}
    \item For $q\leq 0.5$, the tree is finite with probability $1$.
    \item For $q\in(0.5, q_1]$, there are infinite paths with positive probability,
    but the tree is $3$-stable with probability $1$.
    In fact for $q$ sufficiently close to $0.5$,
    the tree $T^{(1)}$ is finite with probability $1$,
    and so in fact the tree is $k$-stable for all $k$, i.e.\ all positions have finite rank 
    (the latently infinite phase). It seems plausible
    that in fact the latently infinite phase continues
    all the way to $q_1$, but we do not know how to 
    demonstrate that.
    \item For $q\in(q_1, q_2]$, 
    with positive probability the tree is not $3$-stable; 
    however it continues to be $2$-stable.
    \item For $q\in(q_2, q_3]$, 
    with positive probability the tree is not $2$-stable;
    however it continues to be $1$-stable.
    \item For $q\geq q_3$, 
    with positive probability the tree is not $1$-stable
    (but as we know, it continues to be $0$-stable, 
    i.e.\ draw-free, for all $q$).
    \end{itemize}

\begin{figure}
    \centering
\includegraphics[width=0.32\linewidth]{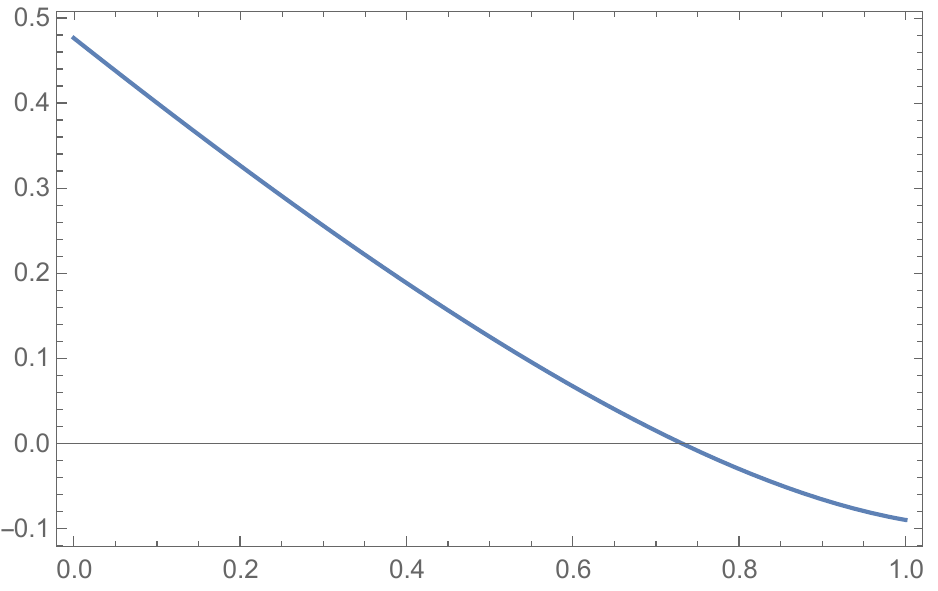}
\includegraphics[width=0.32\linewidth]{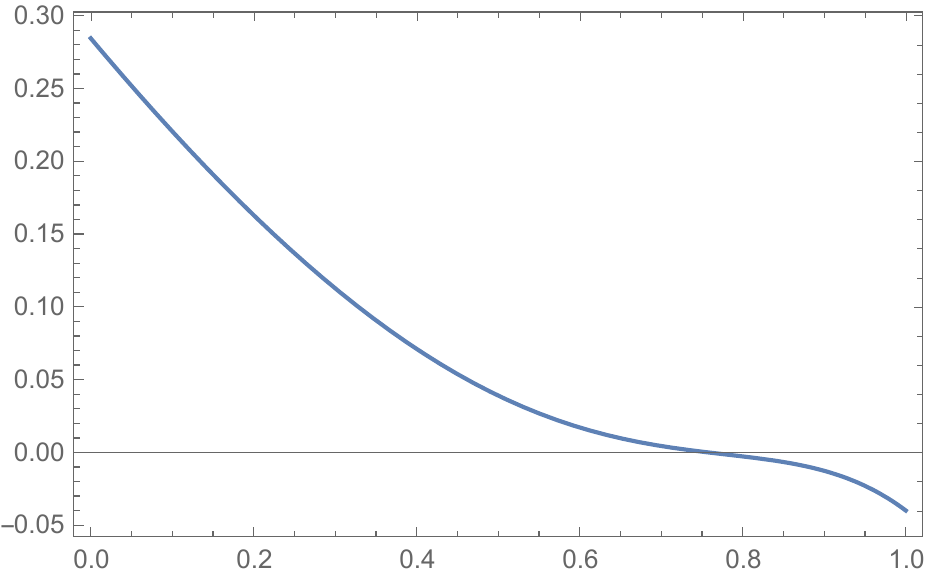}
\includegraphics[width=0.32\linewidth]{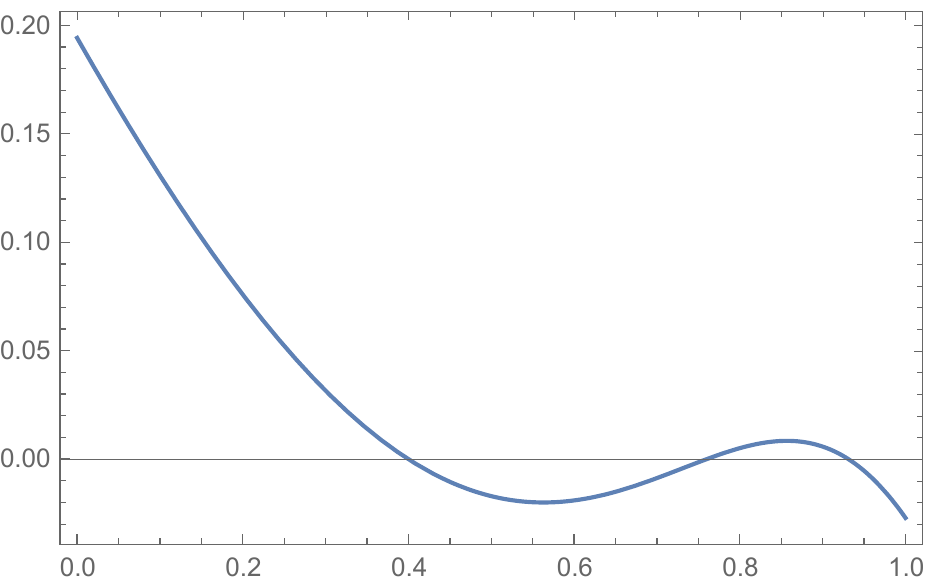}
    \caption{The geometric case 
    of Example \ref{ex:geometric}
    with $q=0.91\in[q_2,q_3)$. 
    As in Figure \ref{fig:04graphs}, we plot the functions
    $h(s)-s$, $h^{(1)}(s)-s$, and $h^{(2)}(s)-s$.
    The functions $h$ and $h^{(1)}$ have unique fixed points,
    but the function $h^{(2)}$ has multiple fixed points; 
    so the tree has probability $1$ to be $1$-stable, 
    but has probability less than $1$ of being $2$-stable. 
    \label{fig:geometric}}
\end{figure}

Except for the transition at $q_0$, the precise
nature and location of all the phase transitions
above are only found numerically. However, with
a sufficiently precise analysis one could rigorously
establish in each case
a smaller interval on which the claimed behaviour holds
(for example we could find some sub-interval of the
claimed interval $(q_2, q_3)$ on which to show
that $h^{(1)}$ has only one fixed point while 
$h^{(2)}$ has more than one fixed point).
\end{example}

In summary, the three families in 
Examples \ref{ex:Poisson}-\ref{ex:geometric}
show a wide variety of behaviours. 
In the Poisson case, one has existence of 
draws whenever one has existence of 
positions with infinite rank. In 
the $0$-or-$4$ case, there is additionally
a phase wth infinite rank vertices but no draws. 
In the geometric case, it is the phase with draws
which is missing; however, one sees additional
phase transitions, losing $3$-stability,
$2$-stability, and $1$-stability step by step
as the parameter increases. 

\bigskip
We end with a question:

\begin{question}
Does there exist for every $k\in\NN$ an offspring distribution for which the \BGW{} tree is $k$-stable with probability $1$, but nonetheless infinite rank positions exist with positive probability?
Numerical explorations have so far only produced examples up to $k=2$ (for example, the Geometric($q$) case
with $q\in(q_1, q_2]$ described above).
\end{question}

\section*{Acknowledgments}
Many thanks to Alexander Holroyd and 
Omer Angel for conversations relating to this work,
particularly during the Montreal summer workshop on
probability and mathematical physics at the CRM in July 2018. 
I am grateful to an anonymous referee, whose comments have considerably 
improved the paper. 
\bibliography{locallyfinite}
\bibliographystyle{abbrv}

\end{document}